\newtheorem{theoremO}{Theorem}
\newtheorem{definition}{Definition}[section]
\newtheorem{theorem}[definition]{Theorem}
\newtheorem{lemma}[definition]{Lemma}
\newtheorem{proposition}[definition]{Proposition}
\newtheorem{remark}[definition]{Remark}
\newtheorem{corollary}[definition]{Corollary}
\newtheorem{fact}[definition]{Fact}
\newtheorem{question}[definition]{Question}
\numberwithin{equation}{section}
\renewcommand{\S}{\mathbb{S}}
\newcommand{\sa}{V_2}
\newcommand{\s}{\mathbf{stein}}
\newcommand{\M}{\mathcal{M}_{\geq 0}^1(\S^2)}
\newcommand{\K}{\mathcal{K}}
\newcommand{\Kl}{\mathbb{K}\mathbf{lein}^\infty}
\newcommand{\oshape}{\mathscr{H}\hspace{-0.06 cm}\emph{om}}
\newcommand{\shape}{\mathscr{S}\hspace{-0.06 cm}\emph{hape}}
\newcommand{\sob}{H^1(\mathbb{S}^{n-1})}
\newcommand{\R}{\mathbb{R}}
\newcommand{\N}{\mathbb{N}}
\newcommand{\w}{\mathbf{mean}}
\newcommand{\A}{\overline{V_2}^n}
\newcommand{\Vb}{\overline{V_1}^n}
\renewcommand{\d}{d_{\mathscr{H}^n}}
\newcommand{\dpk}{d_{\mathscr{H}^{n+p}}}
\newcommand{\dd}{d_{\mathscr{H}^2}}
\newcommand{\di}{d_{\mathscr{H}^\infty}}
\newcommand{\bd}{d_{\mathscr{S}^n}}
\newcommand{\bdpk}{d_{\mathscr{S}^{n+p}}}
\newcommand{\bdd}{d_{\mathscr{S}^2}}
\newcommand{\bdi}{d_{\mathscr{S}^\infty}}
\newcommand{\supp}{\operatorname{Supp}}
\title{Hyperbolic geometry of shapes of  convex bodies}
\author{Cl\'ement Debin and Fran\c{c}ois Fillastre}
\address{Universit\'e Grenoble--Alpe}
\address{CY Cergy Paris Universit\'e / Universit\'e de Montpellier}
\date{\today v2}
\keywords{Convex bodies, intrinsic area, infinite dimensional hyperbolic space, spherical Laplacian. }
\subjclass{Primary 	52A20, Secondary 	52A55}
\begin{document}

\maketitle

\begin{abstract}
We use the intrinsic area  to define a distance on the space of homothety classes of convex bodies in the $n$-dimensional Euclidean space, which makes it isometric to a convex subset of the infinite dimensional hyperbolic space. The ambient Lorentzian structure is an extension of the intrinsic area form of convex bodies, and  Alexandrov--Fenchel Inequality is interpreted as the Lorentzian reversed Cauchy--Schwarz Inequality. 

We deduce that the space of similarity classes  of convex bodies  has a proper geodesic distance with curvature bounded from below by $-1$ (in the sense of Alexandrov). In dimension $3$, this space is homeomorphic to the space  
of distances with non-negative curvature on the $2$-sphere, and this latter space contains the space of flat metrics on the $2$-sphere considered by W.P.~Thurston. Both Thurston's and the area  distances rely on the area form. So the latter may be considered as a  generalization of the "real part" of Thurston's construction.

\end{abstract}

%\tableofcontents

\section{Introduction}

 Let $P$ be a non-empty space of flat metrics on the $2$-sphere,  with $n>3$ prescribed angles $0<\alpha_i<2\pi$ at the cone singularities, up to orientation-preserving similarities, and with a labeling of the cone-points. In a celebrated article \cite{th}, W.P.~Thurston uses the area of the flat metrics to endow $P$ with a \emph{complex} hyperbolic structure. Among the multitude  generalizations and adaptations of this construction, let us consider  subspaces of $P$ endowed with an isometric involution, studied in \cite{BG}. They are isometric to     spaces of homothety classes of plane convex polygons with fixed direction of edges, endowed with  real hyperbolic distances. This latter point of view was then extended to any dimension, using mixed-volumes to hyperbolize some spaces of convex polytopes in $\R^n$. For $n=3$, some of these spaces, which are isometric to (real!) hyperbolic polyhedra, isometrically embeds into $P$ \cite{FI,FIgr}. 

\begin{figure}[h!]
\begin{center}
\includegraphics[width=0.9\linewidth]{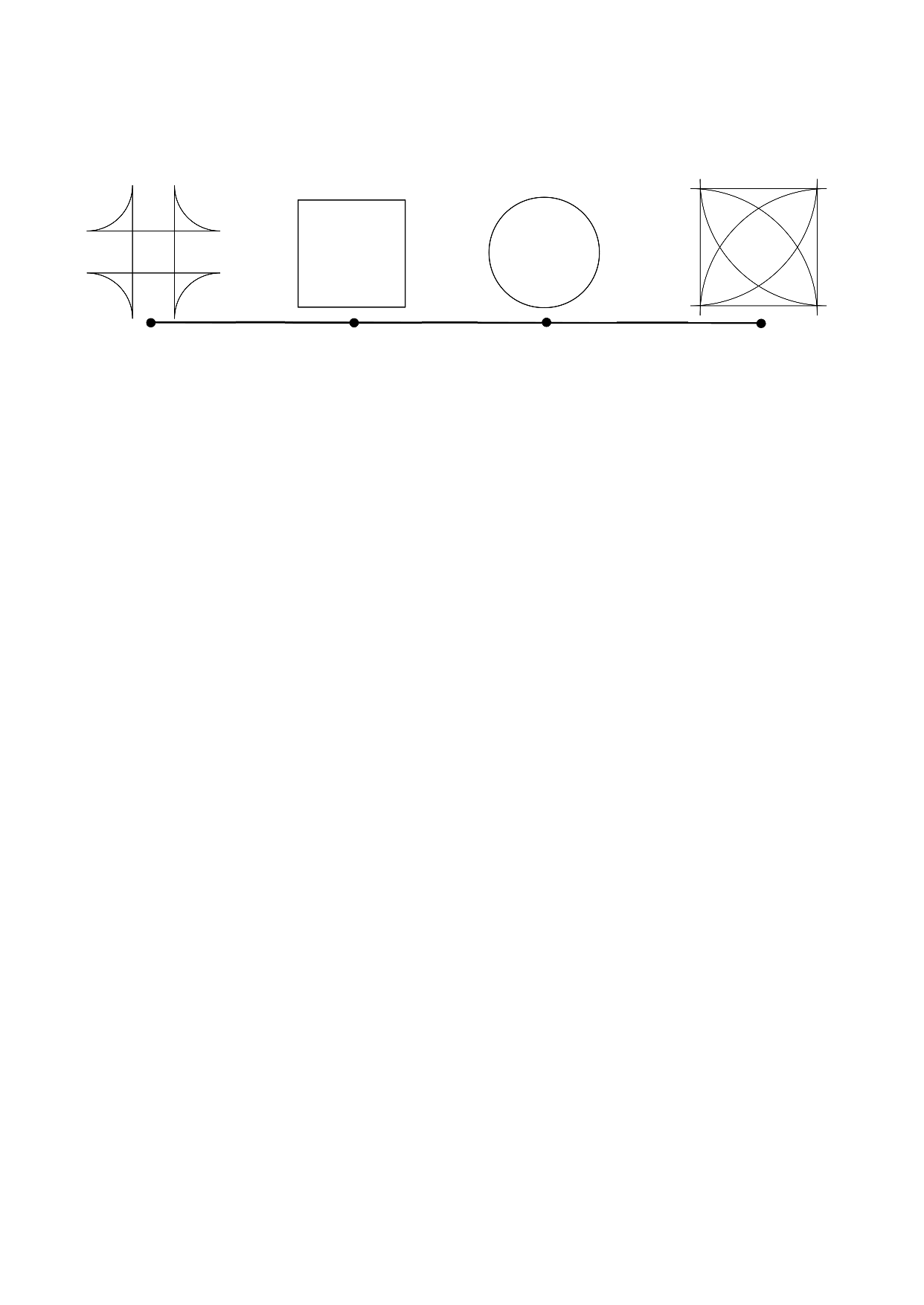}\caption{
The convex segment between the disc and the square is extended until we arrive at two objects of zero (formal) area. The hyperbolic distance between the disc and the square is half of the logarithm  of the cross-ratio of the four points.}\label{fig:premierdessin}
\end{center}
\end{figure} 

In the first part of the present article, we bring this real hyperbolization process to its full generality, by endowing the space of convex bodies in $\R^n$ with an ``area distance'', which appears to be hyperbolic in a sense clarified below.
The idea behind the definition of the area distance is quite natural. Consider  the convex combination $K_t=tK_1+(1-t)K_2$, of two convex bodies,  $t\in[0,1]$. In general, by Alexandrov--Fenchel Inequality, there exists $t_0,t_1\in \R$, $t_0\leq 0<1\leq t_1$ such that 
the formal area of $K_t$ is zero.  
 We then have two points ($0$ and $1$) on the segment $[t_0,t_1]$, and, heuristically, $t_0$ and $t_1$ belong to the isotropic cone of a quadratic form (the area). Mimicking the definition of the distance of the Klein model of the hyperbolic space, we define the area distance as half of the log  of the cross-ratio of $t_0,0,1,t_1$, see Figure~\ref{fig:premierdessin}. See also Figure~\ref{fig:quadrangl}.  The precise definition of the area distance will be given in   Section~\ref{sec intr area}.

  Recall that two subsets $A$ and $B$ of $\R^n$ are \emph{homothetic} if they differ by a translation and a positive scaling.
If $K$ is a convex body, we denote by $[K]$ its homothety class, and by 
 $\oshape^{n*}$ the space of homothety classes of all the convex bodies in $\R^n$, which are different from points and segments.
The area distance introduced above is clearly invariant under homotheties.  
  Let us denote by  $\d$ the induced area  distance on  $\oshape^{n*}$.
Note that it is not obvious that this is actually a distance.

\begin{theoremO}\label{main}
$(\oshape^{n*},\d)$ is a metric space which 
\begin{enumerate}[nolistsep]
\item is uniquely geodesic, and the unique shortest path between $[K_1]$ and $[K_2]$ is  the class of the convex combination of $K_1$ and $K_2$,
\item is of infinite Hausdorff dimension and infinite diameter,
\item is proper,
\item has  curvature bounded from below and above by $-1$ in the sense of Alexandrov,
\item has boundary homeomorphic to the real projective space of dimension $(n-1)$,
\item \label{prop ext} any point is the endpoint of a shortest path that is not extendable beyond this point,
\item \label{prop top}  is homeomorphic to  the space of convex bodies of intrinsic area equal to one and Steiner point at the origin, endowed with the Hausdorff distance.
\end{enumerate}
\end{theoremO}

As some definitions may depend on the authors, let us recall that a metric space is \emph{geodesic} if any two points are joined by a shortest path, it is \emph{uniquely geodesic} if the shortest path is unique;
and it is \emph{proper} if every bounded closed subset is compact. A proper  metric space is locally compact and 
complete. A shortest path is \emph{extendable} if it is strictly contained in another shortest path. The boundary of a metric space  is the set of equivalence classes of geodesic rays at bounded distance, endowed with a natural topology, see \cite{BH} for details. In the present article, the definition of bounded curvature in the sense of Alexandrov is global.

The property \eqref{prop ext} is proved in Section~\ref{sec terminal}. The topological properties in Theorem~\ref{main} are  consequences of   a theorem of R.A.~Vitale and the Blaschke Selection Theorem, see Section~\ref{haus}. 
The other assertions in Theorem~\ref{main} are either straightforward, 
or they come from the following  extrinsic description of $(\oshape^{n*},\d)$.

\begin{theoremO}\label{main1b}
 $(\oshape^{n*},\d)$ is isometric to an infinite dimensional unbounded closed convex subset with empty interior of  the infinite dimensional hyperbolic space. 
\end{theoremO}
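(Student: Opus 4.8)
The plan is to realise the isometry through the \emph{support function}. To each $K\in\K^n$ one associates $h_K\in C(\S^{n-1})$, $h_K(u)=\sup_{x\in K}\langle x,u\rangle$, and this correspondence is additive and positively homogeneous: $h_{K_1+K_2}=h_{K_1}+h_{K_2}$ and $h_{\lambda K}=\lambda h_K$ for $\lambda>0$, so $K\mapsto h_K$ identifies $\K^n$ with a convex cone $\mathcal C$ inside $\sob$. First I would record the representation of the mixed area as a continuous symmetric bilinear form $Q$ on $\sob$ (the content of Section~\ref{sec_laplacian}), with $Q(h_{K_1},h_{K_2})=\sa(K_1,K_2)$. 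Decomposing $\sob=\bigoplus_{k\ge 0}\mathcal H_k$ into spherical harmonics (eigenspaces of the spherical Laplacian, eigenvalue $\lambda_k=k(k+n-2)$), the form diagonalises, $Q=\sum_k c_k\langle\cdot,\cdot\rangle_{\mathcal H_k}$, and the three facts to establish are $c_0>0$, $c_1=0$, and $c_k<0$ with $|c_k|\asymp\lambda_k$ for $k\ge 2$. The vanishing $c_1=0$ is forced by translation invariance: writing $V_2(K+\{p\})=V_2(K)+2V_2(K,\{p\})+V_2(\{p\})$ and using \ref{isotrope}, the degree-one harmonics $\langle p,\cdot\rangle=h_{\{p\}}$ lie in the radical of $Q$; positivity of $c_0$ is the positivity of $\sa$ on balls; and the negativity on higher modes, with the growth $|c_k|\asymp\lambda_k$, is the spectral incarnation of the Alexandrov--Fenchel inequality \ref{alex fench}, which is exactly what makes $-Q$ equivalent to the $\sob$-seminorm on $\bigoplus_{k\ge 2}\mathcal H_k$.

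This shows $Q$ is Lorentzian of index $1$ with radical $\mathcal H_1$. Passing to the quotient $H:=\sob/\mathcal H_1$, equipped with the nondegenerate Lorentzian inner product induced by $Q$, I would \emph{define} $\mathbb{H}_n^\infty$ as the positive sheet $\{[h]:Q([h],[h])=1\}$ of the hyperboloid, with Riemannian metric induced by $-Q$. Bodies $K\in\K^{n*}$ have $\sa(K)>0$ and positive mean width, so their classes of support functions lie in the timelike cone on the positive side; normalising by the homothety that sets $\sa=1$ sends $[K]\in\oshape^{n*}$ to a well-defined $\Phi([K])\in\mathbb{H}_n^\infty$ (translations have been quotiented out). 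That $\Phi$ is isometric is then immediate, since on the hyperboloid one has $\cosh\operatorname{dist}(x,y)=Q(x,y)/\sqrt{Q(x,x)Q(y,y)}$ for timelike representatives, which is exactly $\tilde d_1$; hence $\d=\Phi^*\!\operatorname{dist}$.

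It remains to describe the image. Convexity is formal: $h_{(1-t)K_1+tK_2}=(1-t)h_{K_1}+th_{K_2}$, so $[(1-t)K_1+tK_2]$ corresponds to the straight segment between representatives in $H$, which projects onto the geodesic cut out by the $2$-plane they span; as $\mathcal C$ is a convex cone this geodesic stays in the image, so $\Phi(\oshape^{n*})$ is geodesically convex (reproving the geodesic statement of Theorem~\ref{main}). For the empty interior I would show the cone of support functions is not open in $\sob$: given $h_K$, a high-frequency perturbation $h_K+\varepsilon Y_k$ can be made arbitrarily small in $\sob$ (its norm scales like $\lambda_k^{1/2}$) while its second derivatives scale like $\lambda_k$, breaking the convexity constraint $\nabla^2 h+h\,\mathrm{Id}\ge 0$, so it is not a support function. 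The ideal points come from $\oshape^n\setminus\oshape^{n*}$: by \ref{propV23} these are the classes of segments, parametrised up to translation and homothety by their direction, i.e.\ by $\mathbb{RP}^{n-1}$; along a family degenerating to a segment, $\sa(K)\to 0$ while $Q(h_K,\cdot)$ stays bounded, so the normalised images escape to the boundary at infinity of $\mathbb{H}_n^\infty$.

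The two genuinely nontrivial points are the signature analysis and the closedness of the image. The former is the technical heart: one must convert Alexandrov--Fenchel into the uniform equivalence of $-Q$ with the $\sob$-inner product on higher harmonics, which is what guarantees that $\mathbb{H}_n^\infty$ is an honest Hilbert-space hyperbolic space rather than a formal object. Closedness of $\Phi(\oshape^{n*})$ in $\mathbb{H}_n^\infty$ (the ideal points sitting on the sphere at infinity) is, I expect, the main obstacle: I must show that an $\sob$-convergent sequence of normalised support functions has as limit again a support function with $\sa=1$. Here I would use that a point of $\mathbb{H}_n^\infty$ has finite $H$-norm (so the $h_{K_j}$ are $\sob$-bounded), then invoke the Blaschke selection theorem together with Vitale's comparison between Hausdorff convergence and convergence of support functions to extract a subsequence $K_j\to K$ with $h_{K_j}\to h_K$ in $\sob$; continuity of $\sa$ gives $\sa(K)=1$, placing the limit in the image.
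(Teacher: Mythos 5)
Your overall route is the paper's: represent bodies by support functions, observe that the intrinsic mixed area is the quadratic form $\A(h)=c_n\left(\|h\|_{L^2}^2-\lambda_1^{-1}\|\nabla h\|_{L^2}^2\right)$, check that it is Lorentzian with radical the first eigenspace and negative definite on the higher modes (uniformly equivalent there to the $H^1$-norm), and read off the isometry onto a convex subset of the hyperboloid. Your spectral coefficients $c_0>0$, $c_1=0$, $c_k<0$ with $|c_k|\asymp\lambda_k$ are exactly right; the one quibble is that the paper obtains the definiteness directly from the Rayleigh spectral gap $\lambda_2>\lambda_1$ (Lemma~\ref{lemme equiv distance}) and then \emph{deduces} Alexandrov--Fenchel from the resulting reversed Cauchy--Schwarz inequality, rather than importing Alexandrov--Fenchel as you suggest --- your direction is not circular for this theorem, just heavier than needed. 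Convexity, the ideal points, and the closedness scheme (boundedness in $\sob$, Blaschke selection, Vitale's theorem, continuity of $\A$) are all as in the paper. For closedness the paper works in the Klein model $V_1=1$, where uniform boundedness of the bodies is immediate (Fact~\ref{fact convexes dans B(1)}) but a limit with $\A=0$ must be excluded separately (Proposition~\ref{lem bord infini}); your $V_2=1$ normalisation makes the limit's area automatic by continuity of $\A$, but then you must justify that a $d_{\mathcal{H}}$-convergent sequence has bounded $V_1$ (via the local bi-Lipschitz comparison of $d_{\mathcal{H}}$ with the $H^1$-norm, Corollary~\ref{cor bilip}) before Blaschke applies, and use the equivalence of topologies on the hyperboloid to identify the two limits. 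Either bookkeeping works.

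Two points need attention. First, your empty-interior argument is genuinely different from the paper's, which deduces it from properness: a closed ball contained in the image would be compact, and via Corollary~\ref{cor bilip} this would produce a relatively compact open set in the infinite-dimensional Hilbert space $(\sob_{01},d_{01})$, which is impossible. Your high-frequency perturbation $h_K+\varepsilon Y_k$ is a sound alternative, but the pointwise criterion $\nabla^2h+h\,\mathrm{Id}\ge0$ is only usable when $h_K$ is $C^2$; a general support function is merely Lipschitz, and the Hessian comparison would have to be run distributionally, essentially as in Proposition~\ref{propositionpointsterminaux}. The cheap fix: the interior of the image, were it non-empty, would be open and hence would meet the dense set of $C^2_+$ support functions (Fact~\ref{fact:approxC2} combined with the equivalence of topologies), so it suffices to treat smooth strictly convex $h_K$, where your computation goes through. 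Second, you never address why the image is \emph{infinite dimensional}; the paper gets this from the isometrically embedded hyperbolic polyhedra of arbitrary dimension of \cite{BG}. Neither issue is fatal, but both need a line in a complete write-up.
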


Here, ``the'' infinite dimensional hyperbolic space is defined from a separable Hilbert space.
 The isometry in Theorem~\ref{main1b} is obtained by considering the support function of convex bodies. Under this identification, the area of convex bodies  will give a bilinear form, that appears to have a Lorentzian signature. This is actually very natural, as for example, Alexandrov--Fenchel Inequality for mixed-area is then given by a reversed Cauchy--Schwarz Inequality. 
 
We say that the distance $\d$ is hyperbolic, because it is 
isometric to a totally geodesic subspace of an hyperbolic space, 
or because of the curvature property (4) in  Theorem~\ref{main} (the latter being an immediate consequence of the former). Note that for metric spaces, it is meaningless to speak about ``curvature equal to $-1$''.

It was pointed out by Nicolas Monod  to the second author that  the present construction for $n=2$  gives an explicit example of an exotic action of $PSL(2,\R)$ on the infinite dimensional hyperbolic space \cite{MP}.

\bigbreak

In the second part of the present article, we investigate  $\shape^{n*}$,  the quotient of $\oshape^{n*}$ by linear isometries of the Euclidean space $\R^n$: $\shape^{n*}$ is the space of convex bodies in $\R^n$ (not reduced to points or segments) up to Euclidean similarities (such an equivalence class  is the ``shape'' of the convex body). It is endowed with the quotient distance $\bd$. We obtain the following.
 \begin{theoremO}\label{main2}
$(\shape^{n*},\bd)$ is a proper geodesic metric space with  curvature $\geq -1$ and with boundary reduced to a single point. It is not uniquely geodesic. It contains many totally geodesic hyperbolic surfaces.
\end{theoremO}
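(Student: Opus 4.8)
The plan is to realise $\shape^{n*}$ as the metric quotient $\oshape^{n*}/O(n)$, where $O(n)$ acts by $A\cdot[K]=[A(K)]$. This action is isometric because $V_2$, and hence $\d$, is invariant under linear isometries (this is the content of \ref{isom} together with invariance of the Steiner polynomial under reflections). Since $O(n)$ is \emph{compact} and acts continuously by isometries, the orbits are compact and the quotient pseudo-distance
\begin{equation*}
\bd([K_1],[K_2])=\min_{A\in O(n)}\d([K_1],[A(K_2)])
\end{equation*}
is a genuine distance, the minimum being attained. Granting the general properties of such quotients recalled in Section~\ref{sec shapes}, three of the assertions follow at once: the quotient of a CBB($-1$) space by a compact isometry group is again CBB($-1$); properness descends because the preimage of a closed bounded set is closed, $O(n)$-invariant, and contained in the $R$-neighbourhood of the compact orbit $O(n)\cdot x_0$, hence bounded and, by properness of $\oshape^{n*}$, compact; and a proper length space is geodesic by Hopf--Rinow, the quotient of a length space being a length space.

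For the boundary I would use Theorem~\ref{main1b} to realise $\oshape^{n*}$ as a convex subset of $\H$ whose ideal boundary $\oshape^{n}\setminus\oshape^{n*}$ consists of the shapes of segments, i.e. of their directions, and is therefore a copy of $\mathbb{RP}^{n-1}$. The linear action of $O(n)$ extends to this boundary as the standard \emph{transitive} action on the set of lines through the origin. Because $O(n)$ is compact, a horizontal lift turns geodesic rays of $\shape^{n*}$ into geodesic rays of $\oshape^{n*}$, and two such rays stay at bounded distance downstairs exactly when their ideal endpoints lie in a common $O(n)$-orbit; transitivity on $\mathbb{RP}^{n-1}$ then collapses the whole boundary to a single point. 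The step needing care is precisely the identification $\partial(\oshape^{n*}/O(n))=(\partial\oshape^{n*})/O(n)$, which I would justify from compactness, by extracting a convergent subsequence of the aligning isometries along the rays.

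The non-uniqueness of geodesics is the delicate point. A shortest path from $[K_1]$ to $[K_2]$ in $\shape^{n*}$ is the projection of the segment $t\mapsto[(1-t)K_1+tA(K_2)]$ for a minimizing $A$, and two minimizers $A_1,A_2$ yield the \emph{same} quotient geodesic precisely when they lie in a single double coset $G_{K_1}A_1G_{K_2}$, where $G_{K}\subset O(n)$ is the stabilizer of $[K]$. A short direct computation shows that every extra minimizer forced by a symmetry of $K_1$ or of $K_2$ stays inside that double coset, so symmetries of the endpoints never break uniqueness; genuine branching must come from an \emph{accidental} equidistance, namely a point $[K_1]$ whose distance to an orbit $O(n)\cdot[K_2]$ is realised at two orbit points lying in distinct double cosets. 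The main obstacle is to produce such a configuration. I would fix, say in $n=2$, a body $K_2$ with trivial symmetry group and study $\theta\mapsto\d([K_1],[R_\theta(K_2)])$; choosing $[K_1]$ as a suitably symmetric point (for instance a midpoint between two rotated copies) forces two global minima by an intermediate-value argument, while triviality of the stabilizers guarantees that the two minimizers sit in different double cosets. This is the one place where an explicit, if short, computation seems unavoidable.

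Finally, for the totally geodesic hyperbolic surfaces I would exploit that, as recalled after Theorem~\ref{main}, $\oshape^{n*}$ already contains finite-dimensional totally geodesic copies of hyperbolic space coming from \cite{BG,FI}. Inside such a copy I would select a totally geodesic $\mathbb{H}^2$ consisting of bodies that are pairwise $O(n)$-inequivalent, each with trivial stabilizer, and positioned so that $\d(x,y)=\bd([x],[y])$ for all its points; its image under the projection is then a totally geodesic hyperbolic surface in $(\shape^{n*},\bd)$, and varying the ambient $2$-plane produces ``many'' of them. The only thing to verify here is that the chosen plane meets each $O(n)$-orbit at most once at the relevant scale, which again holds once the bodies are taken generic.
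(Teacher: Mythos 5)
Your treatment of the quotient machinery (CBB($-1$), properness, geodesicity) and of the boundary coincides in substance with the paper's: the paper proves exactly the general statement you invoke (quotient of a proper geodesic CBB($-1$) space by a compact isometry group, with the minimum in the definition of $\bd$ attained), and identifies the boundary with $\mathbb{RP}^{n-1}/O(n)$ as you do. Your double-coset criterion for when two minimizing $A\in O(n)$ yield the same projected geodesic is also correct. The two remaining assertions, however, are precisely where the paper does genuine work, and in both places your proposal has a gap.

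For non-uniqueness, you correctly reduce the problem to exhibiting $[K_1],[K_2]$ for which $\theta\mapsto \d([K_1],R_\theta[K_2])$ attains its minimum at two rotations lying in distinct double cosets, but you do not produce such a configuration, and the ``intermediate-value argument'' you sketch does not force one. If $[K_1]$ is chosen symmetric, the symmetry only yields pairs of minimizers $R_{\pm\theta_0}$, and nothing prevents the global minimum from sitting at a symmetric angle $\theta_0\in\{0,\pi/2\}$ where the pair degenerates; whether the two surviving minimizers lie in distinct double cosets then depends on the stabilizers in a way that must be checked. The paper's Section on non-uniqueness is an honest computation: $K_1$ is a half-ellipse and $K_2$ a rectangle whose proportions are tuned so that $f(0)=f(\pi/2)$ for the explicit function $f(\theta)=2V_2(K_1,R_\theta K_2)$, the signs $f'(0)>0$, $f'(\pi/2)<0$ are computed, and a concavity argument shows $f'$ has at most one interior zero, so both endpoints are global minima; finally one checks by hand that the two projected geodesics differ. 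None of this is routine, and your proposal leaves all of it open.

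For the totally geodesic hyperbolic surfaces, your criterion is wrong as stated: requiring that the chosen totally geodesic $\mathbb{H}^2\subset\oshape^{n*}$ ``meets each $O(n)$-orbit at most once'' does not imply that the projection restricted to it is isometric. What is needed is that for \emph{every} pair $x,y$ in the surface and every $A\in O(n)$ one has $\d(x,y)\le \d(x,A(y))$, i.e.\ that the identity is always a minimizing aligner; a plane can be injective on orbits and still fail this, in which case $\bd(\llbracket x\rrbracket,\llbracket y\rrbracket)<\d(x,y)$ and the image is not totally geodesic (indeed not even isometric). Genericity does not help here. The paper circumvents the difficulty by anchoring at the global fixed point $[B^n]$ of the $O(n)$-action: it takes the filled geodesic triangle with vertices $[B^n]$, $[P]$, $A[Q]$ (with $A$ minimizing) and verifies the inequality $V_2(K_1,K_2)\le V_2(K_1,\Phi(K_2))$ for all points $K_1,K_2$ of the triangle by expanding the mixed areas bilinearly and invoking the Alexandrov--Fenchel inequality term by term, the terms involving $B^n$ being automatically $O(n)$-invariant. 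This use of the fixed point and of Alexandrov--Fenchel is the missing idea in your proposal.
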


There is another complex hyperbolic orbifold considered by Thurston,  which is defined similarly to the space $P$ introduced at the beginning of the present article, but where the singular points are not labeled. It is a subspace of $\M$, the space of metrics of non-negative curvature on the sphere, up to isometries, and with unit area. A natural  generalization of Thurston construction would be  to use  the area of the  metrics to endow $\M$ with a distance, and look at its properties. For example, one may look at curvature properties, or possible complex structure. From  \cite{Bel} and  \eqref{prop top} in Theorem~\ref{main}, it follows that  $\shape^{3*}$ and $\M$ are homeomorphic, if the latter space is endowed with the topology of uniform convergence of distances.\footnote{For $n\geq 3$, the induced inner distance on the boundary of a convex body in $\R^n$ is (isometric to) a 
distance of non-negative curvature on $\mathbb{S}^{n-1}$ in the sense of Alexandrov. But not every such distance of non-negative curvature on $\mathbb{S}^{n-1}$ can 
arise in this way 
(\cite{matv}, \cite[1.9]{alex}).} So  Theorem~\ref{main2}, for $n=3$, may be seen as a  ``real hyperbolization'' of $\M$ with its natural topology.  Here the word ``hyperbolization'' is used in a wide sense, as  as $(\shape^{n*},\bd)$ is not uniquely geodesic, it is not of non-positive curvature, hence not with curvature $\leq -1$. However that's an open question to  know if it is locally of non-positive curvature.

We conclude the present article by a question about the space of shapes of \emph{all}  convex bodies (regardless of the dimension of the ambient space).

\bigbreak

As we pointed out, the idea to consider convex bodies in an ambient hyperbolic space came from the observation that  the Alexandrov--Fenchel Inequality for the mixed-area of convex bodies
looks like the reversed Cauchy--Schwarz Inequality in a Lorentzian vector space (see Remark~\ref{rem ref}). In dimension $2$, Alexandrov--Fenchel Inequality coincides with the Minkowski inequality. Also, mixed-volumes were introduced by Minkowski. He also introduced Lorentzian vector spaces, which are now called Minkowski spaces. We are not aware if Minkowski knew a relation between the inequality and the spaces that both bear his name. But as far as we know, it seems that in the meantime this relation between the fundamental inequality of the theory of convex bodies and basic Lorentzian geometry was forgotten.

\subsection*{Acknowledgements.}
The authors want to thank Nicola Gigli, Julien Maubon, Nicolas Monod,  Graham Smith, Pierre-Damien Thizy and Giona Veronelli for useful conversations. They also want to thank Igor Belegradek who helped to clarify some points in a preceding version of the text. This work was completed during a visit of the second author at SISSA. He wants to thank the institution for its hospitality.

\section{The area distance}

\subsection{Intrinsic area of convex bodies}\label{sec intr area}

A \emph{convex body} is  a non-empty compact convex subset of $\mathbb{R}^n$. In the present article, we set $n>1$.
For a plane convex body $K$ (i.e. a convex body in $\R^2$), speaking about the ``area''  of $K$ usually means to look at its volume (two  dimensional Lebesgue measure). Note that the area of plane convex bodies is positively homogeneous of degree $2$: for $\lambda>0$, 
$\operatorname{vol}_2(\lambda K)=\lambda^2 \operatorname{vol}_2(K)$. For a convex body in $\R^3$,  the ``area'' usually refers to its surface area, i.e. the $2$-dimensional total Hausdorff measure of its boundary $\partial K$.
 Here also, the surface area is 
positively homogeneous of degree two. 

For $n>3$, there are two ways to generalize the notion of ``area'' to convex bodies in $\R^n$. 
Both are coming from the 
 Steiner Formula.
 Let $B^n$ be the closed unit ball centered at the origin in $\R^n$, and let  $\kappa_n$ be its volume. Let us set $\kappa_0=1$ and $\kappa_1=2$. If $K$ is a convex body in $\R^n$, then there exist non-negative real numbers $V_i(K)$, $i=0,\ldots,n$ such that, for any $\epsilon >0$,
  \begin{equation}\label{steiner}\mathrm{vol}_n(K+\epsilon B^n)=\sum_{i=0}^n \epsilon^{n-i}\kappa_{n-i}V_i(K)~.\end{equation}
Here $\mathrm{vol}_n$ is the Lebesgue measure of $\R^n$, and the sum is the Minkowski addition: $A+B=\{a+b|a\in A,b\in B\}$.
It appears that $V_0(K)=1$ and $V_n(K)=\mathrm{vol}_n(K)$. 
%See Figure~\ref{fig:segments} for a very basic example.

The first way to generalize the notion of surface area of convex bodies in $\R^3$ is to consider $V_{n-1}(K)$ as the ``area'', given by the first order variation of $\operatorname{vol}_n(K+\epsilon B^n)$, seen as a function of $\epsilon$. Note that this ``area'' is homogeneous of degree $(n-1)$, and that for $n=2$, this is related to the perimeter of the convex body and not to its area. 

In the present article, we consider another way to generalize the notion of surface area of convex bodies in $\R^3$, and  we call $V_2(K)$ given by \eqref{steiner} the \emph{intrinsic area} of $K$. Let us mention some relevant properties. The property \ref{mcmullen} explains the terminology ``intrinsic''.

\begin{enumerate}[label={A\arabic*)},nolistsep]
\item \label{homo} For any $\lambda >0$, $\sa(\lambda K)=\lambda^2\sa(K)$;
\item \label{prop 1V2} $V_2(K)\geq 0$;
\item \label{propV22} $K_1\subset K_2 \Rightarrow V_2(K_1) \leq V_2(K_2)$;
\item  \label{propV23} $V_2(K)=0$ if and only if $K$ is a point
or a segment;
\item \label{isom} for any $A\in O(n)$ and $p\in \R^n$,
$V_2(A(K)+\{p\})=V_2(K)$;
\item \label{mcmullen}
Let $\iota:\R^n\to \R^{n+1}$ be a linear isometric embedding. 
Then $V_2(\iota(K))=V_2(K)$.
\end{enumerate}

%  
%\begin{figure}
%\begin{center}
%\psfrag{O}{$\ell$}
%\psfrag{e}{$\epsilon$}
%\psfrag{k}{$K$}
%\includegraphics[width=0.4\linewidth]{segment}\caption{Let $K$ be a segment of length $\ell$ in $\R^n$. Then $\mathrm{vol}_n(K+\epsilon B^n)$ is the sum of the volume of two half $n$-dimensional balls of radius $\epsilon$, plus $\ell$ times the volume of a $(n-1)$-dimensional ball. By \eqref{steiner}, $V_2(K)=0$ and $V_1(K)=\ell$.}\label{fig:segments}
%\end{center}
%\end{figure}

The (intrinsic) area can be ``polarized'', in the sense that there exists a function called the (intrinsic) \emph{mixed-area} $\sa(\cdot,\cdot)$, that can be defined as
\begin{equation}\label{def mixed v2}
V_2(K_1,K_2)=\frac{1}{2}\left(\sa(K_1+K_2)-\sa(K_1)-\sa(K_2) \right)~,
\end{equation} 
and satisfies the following properties:
\begin{enumerate}[label={M\arabic*)},nolistsep]
\item \label{mixed 1} $\sa(K_1,K_1)=\sa(K_1)$;
\item \label{mixed 2} $\sa(K_1,K_2)=\sa(K_2,K_1)$;
\item \label{mixed 3} $\sa(K_1+K_2,K_3)=\sa(K_1,K_3)+\sa(K_2,K_3)$;
\item \label{mixed 4} for $\lambda >0$, $\sa(\lambda K_1,K_2)=\lambda  \sa(K_1,K_2)$;
 \item \label{prop mixed incl} $K_1\subset K_2 \Rightarrow V_2(K_1,K_3)\leq V_2(K_2,K_3)$;
\item \label{isotrope} $K$ is a point if and only if  for any convex body $Q$, 
$\sa(K,Q)=0$;
\item \label{mixed 2'} $V_2(K_1,K_2) \geq 0$; and $V_2(K_1,K_2)=0$ if and only if $K_1$ or $K_2$ is a point, or both are segments with the same direction;
\item \label{alex fench} we have
\begin{equation} \label{eq alex fench}
\delta(K_1,K_2)=\sa(K_1,K_2)^2 -  \sa(K_1)\sa(K_2)\geq 0
\end{equation}
and if $K_1$ and $K_2$ are not points, then equality occurs if and only if $K_1$ and $K_2$ are homothetic.
\end{enumerate}

All those properties are classical, as $V_2$ is a particular case of mixed-volume: $V_2(K_1,K_2)=V(K_1,K_2,B^n,\ldots,B^n)$ 
 \cite{schneider}. Property M8) is Alexandrov--Fenchel Inequality. In the present article, we will generalize the properties listed above, using some simple analysis of functions on the sphere. Before that, let us introduce the area distance on the space of homothety classes of convex bodies.
We will give two equivalent definitions, both using Alexandrov--Fenchel Inequality  \ref{alex fench}.

In the sequel, we denote by $\mathcal{K}^n$ the set of convex bodies in $\R^n$, and by $\mathcal{K}^{n*}$ the subset of convex bodies of positive intrinsic area. In other terms,  by \ref{prop 1V2} and  \ref{propV23},  $\mathcal{K}^{n*}$  is $\mathcal{K}^n$ minus points and segments.
By property \ref{alex fench} of the mixed-area, for any $K_1,K_2 \in \mathcal{K}^{n*}$, the quantity
$$\tilde{d}_1(K_1,K_2) = \operatorname{argch}\left( \frac{V_2(K_1,K_2)}{\sqrt{V_2(K_1)V_2(K_2)}}\right)~
$$
is well-defined. This is also clear that $\tilde{d}_1(K_1,K_2)$ is invariant under  positive scaling of $K_1$ and $K_2$. 
Moreover, by \ref{isom} and \eqref{def mixed v2},
for all $p\in\R^n$,
$$V_2(K_1+\{p\},K_2)=V_2(K_1,K_2+\{p\})=V_2(K_1,K_2)~, $$
hence $\tilde{d}_1$ is invariant under translations of $K_1$ or $K_2$. By the case of equality in property \ref{alex fench}, $\tilde{d}_1(K_1,K_2)=0$ if and only if $K_1$ differ from $K_2$ by a homothety.

Let us define the space $\oshape^{n}$ (resp. $\oshape^{n*}$) as the quotient of $\mathcal{K}^{n}$ (resp. $\mathcal{K}^{n*}$) by homotheties. For a convex body $K$, we denote by $[K]$  the set of homothetic copies of $K$.
For any $[K_1],[K_2] \in \oshape^{n*}$ we set
$$ d_1([K_1],[K_2])=\tilde d_1(K_1,K_2)~.$$

 Let us do it in a different way. Let $K_1,K_2 \in \mathcal{K}^{n*}$. Assume that $V_2(K_1)=V_2(K_2)=a>0$ and that $[K_1] \neq [K_2]$. Consider the following equation:
\begin{equation}\label{equation}
\sa((1-t)K_1+t K_2)=0~.
\end{equation}
By properties of the mixed-area, the left-hand side is a polynomial in $t$, and the coefficient of $t^2$ is $2a-2V_2(K_1,K_2)$.
Since $[K_1] \neq [K_2]$, by Alexandrov-Fenchel Inequality \ref{alex fench}, we have $V_2(K_1,K_2) > a$: the coefficient of $t^2$ is negative, in particular this is a second order polynomial. An easy calculation shows that its discriminant is equal to $4\delta(K_1,K_2) >0$ (see \eqref{eq alex fench}). Let $t_1<0<1<t_2$ be the two real solutions of the equation \eqref{equation}, and let us define 
$$\tilde{d}_2(K_1,K_2)=\frac{1}{2} \ln [0,1,t_1,t_2]~,$$
 where  $[0,1,t_1,t_2]=\frac{t_1}{t_2}\frac{1-t_2}{1-t_1}$  is the cross-ratio.

By \eqref{equation}, it is clear that $\tilde{d}_2$ is invariant by translation of $K_1$ or $K_2$. Let $[K_1], [K_2]\in \oshape^{n*}$, and let $K_1, K_2$ be two 
representatives having the same intrinsic area. We can then define
$$d_2([K_1],[K_2])=\tilde{d}_2(K_1,K_2)~,$$
if $[K_1]\not= [K_2]$, and zero otherwise.

Classical trigonometry computations from hyperbolic geometry show $d_1=d_2$. We define the \emph{area distance} on $\oshape^{n*}$ as $$\d:=d_1=d_2~.$$ (Note that we didn't proved yet that it is a distance.)

Even if the space of convex bodies is not a  vector space, from its properties the mixed-area reminds a symmetric bilinear form, whose kernel is the space of  points, and whose isotropic cone is the space of points and segments. Moreover, Alexandrov--Fenchel Inequality \eqref{eq alex fench} reminds a reversed Cauchy--Schwarz Inequality. To define $d_1$ and $d_2$ above, we mimicked the definitions of the hyperboloid model and the Klein model of the hyperbolic space. It is actually the way we will prove Theorem~\ref{main}.

\subsection{Spaces of support functions}\label{sec supp}

\begin{figure}
\begin{center}
\psfrag{C}{$\mathcal{C}_n$}
\psfrag{K}{$\Kl_n$}
\psfrag{H}{$\mathcal{H}_n^\infty$}
\psfrag{H+}{$\w>0$}
\psfrag{H01}{$\sob_{01}$}
\psfrag{L}{$\operatorname{L}$}
\psfrag{1}{}
\psfrag{c}{}
\psfrag{P}{}
\psfrag{V1}{$\Vb=1$}
\psfrag{V}{$\Vb=0$}
\includegraphics[width=0.5\linewidth]{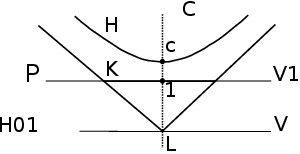}\caption{Notations for subspaces of $\sob_1$.}\label{fig:notations}
\end{center}
\end{figure}

The \emph{support function} $\supp(K)$ of a convex body $K$ in $\R^n$ gives, at the point $x\in \S^{n-1}$, the distance from the origin of $\R^n$ to the support hyperplane of 
$K$ with outward normal $x$. More precisely, 
$\supp(K):\S^{n-1}\to \R$ is defined as
$$\supp(K)(x)=\max_{p\in K} \langle x,p\rangle~, $$
where $\langle\cdot,\cdot\rangle$ is the usual  scalar product of $\R^n$. 

Let us denote by $\|\cdot\|_{L^2}$ the $L^2$ norm on the round sphere $\S^{n-1}$. 
Let $\sob$ be the Sobolev space of $\S^{n-1}$, i.e. the space  of functions $\S^{n-1}\to \R$ which are in $L^2(\S^{n-1})$
 as well as their first order derivatives in the weak sense. The space $\sob$ is implicitly endowed with  the norm
$$\|h\|_{H^1}=\left(\|h\|_{L^2}^2 + \| \nabla h\|_{L^2}^2\right)^{1/2}
=\left(\int_{\mathbb{S}^{n-1}}h^2+\|\nabla h\|^2\right)^{1/2} $$
where the gradient $\nabla$ is the one of the round sphere. 

If $K$ is contained in the ball centered at the origin and with radius $R$, then  $\supp(K)$ is $R$-Lipschitz.
Hence we get a map
$$\supp : \mathcal{K}^n\to \sob~. $$

Let us recall some basic properties  \cite{schneider,groemer,heil}:
\begin{itemize}[nolistsep]
\item a function $h:\mathbb{S}^{n-1}\to \R$ is the support function of a convex body in $\R^n$ if and only if its one homogeneous extension $\tilde{h}:\R^n\setminus\{0\}\to \R$, $\tilde{h}(x)=\|x\|h(x/\|x\|)$, $\tilde{h}(0)=0$,  is a convex function;
\item $\supp(K_1+K_2)=\supp(K_1)+\supp(K_2)$, $\supp(\lambda K)=\lambda \supp(K)$, $\lambda>0$; in particular, 
$\supp(\mathcal{K}^n)$ is a convex cone in $\sob$.
\item $\supp$ is a bijection onto its image; 
\item if $K_1\subset K_2$, then $\supp(K_1)\leq \supp(K_2)$;
\item if $(\supp(K_i))_i$ converges pointwise to $\supp(K)$, then
the convergence is uniform;
\item if $(\supp(K_i))_i$ converges  to $\supp(K)$, then almost everywhere
$(\nabla \supp(K_i))_i\to \nabla \supp (K)$.
\end{itemize}

\begin{remark}{\rm 
Let us warn the reader that if $\supp(\lambda K)=\lambda \supp(K)$, $\lambda>0$, we don't have $\supp(- K)=- \supp(K)$ in general, where $-K=\{-x | x\in K \}$. Indeed, both $\supp(- K)$ and $\supp(K)$ are positive if the origin of $\R^n$ is in the interior of $K$. 
Actually, $\supp(-K)(v)=\supp(K)(-v)$, and $- \supp(K)$ is like the support function of $K$, but with the  support planes defined by their  \emph{inward} unit normals.
}\end{remark}

Let us set $\lambda_1=n-1$ and $c_n$ be a given positive constant.  
For $h\in \sob$, let us consider the quadratic form 
\begin{equation}\label{eq def A}\A(h)=c_{n}\left(\|h\|_{L^2}^2 -\lambda_1^{-1} \|\nabla h\|_{L^2}^2\right)~, \end{equation}
that comes from  the following bilinear form: for $h,k\in \sob$,
$$\A(h,k)=c_{n} \left( \mathbf{(}h,k\mathbf{)}_{L^2} - \lambda_1^{-1}(\nabla h,\nabla k)_{L^2}\right)~.$$
To avoid confusion, let us emphasis that $\A(h,h)=\A(h).$ 
It is known (see e.g. \cite[Theorem~4.2a]{heil}, \cite[p.~298]{schneider} or \cite[Proposition~2.4.2]{groemer}) that for any $n$ there is a unique $c_n$ such that, for any $K_1, K_2\in \mathcal{K}^n$
$$V_2(K_1,K_2)=c_n\A(\supp(K_1),\supp(K_2))~. $$

Let us first restrict $\A$ to a subspace where it is not degenerate. Hopefully, the kernel of $\A$  is exactly the image of points by $\supp$. Indeed, the support function of the point $p\in \R^n$ is the restriction to the sphere of the linear map $x\mapsto \langle p,x\rangle$. But the space of such maps is the eigenspace of the first non-zero eigenvalue of the Laplacian on the round sphere, and this eigenvalue is the $\lambda_1$ in \eqref{eq def A}, so we deduce easily the following fact.

\begin{fact}\label{noyau A}
The kernel of $\A(\cdot,\cdot)$ on $\sob$ is the eigenspace of $\lambda_1$. 
\end{fact}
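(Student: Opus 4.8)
The plan is to diagonalize the quadratic form $\A$ using the spectral decomposition of the spherical Laplacian $\Delta$, and then read off the kernel directly. First I would decompose an arbitrary $h \in \sob$ along the eigenspaces of $\Delta$: writing $h = \sum_{k \geq 0} h_k$ where $\Delta h_k = \lambda_k h_k$ and the $h_k$ are mutually $L^2$-orthogonal (hence, by Stokes/the same argument used for $\sob_1$, also $H^1$-orthogonal, so the gradients are $L^2$-orthogonal too). Then $\|h\|_{L^2}^2 = \sum_k \|h_k\|_{L^2}^2$ and, using $\|\nabla h_k\|_{L^2}^2 = \lambda_k \|h_k\|_{L^2}^2$ (by Stokes' theorem applied to $h_k \Delta h_k$), one gets $\|\nabla h\|_{L^2}^2 = \sum_k \lambda_k \|h_k\|_{L^2}^2$.

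Substituting into \eqref{eq def A}, and recalling $\lambda_1 = n-1 > 0$ and $c_n > 0$, I would obtain the clean diagonal expression
\begin{equation*}
\A(h) = c_n \sum_{k \geq 0} \left(1 - \frac{\lambda_k}{\lambda_1}\right)\|h_k\|_{L^2}^2~.
\end{equation*}
Since $0 = \lambda_0 < \lambda_1 < \lambda_2 < \cdots$, the coefficient $1 - \lambda_k/\lambda_1$ is strictly positive for $k=0$, equal to zero exactly for $k=1$, and strictly negative for all $k \geq 2$. This immediately makes the form's signature transparent: it is positive on the constants, null on the $\lambda_1$-eigenspace, and negative-definite on the higher modes.

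To extract the kernel of the bilinear form $\A(\cdot,\cdot)$, I would argue that $h$ lies in the kernel if and only if $\A(h,k) = 0$ for all $k \in \sob$. Polarizing the diagonal formula gives $\A(h,k) = c_n \sum_{j \geq 0}(1 - \lambda_j/\lambda_1)(h_j, k_j)_{L^2}$. Testing against $k = h_j$ for each fixed $j$ forces $(1 - \lambda_j/\lambda_1)\|h_j\|_{L^2}^2 = 0$, so $h_j = 0$ whenever $j \neq 1$; conversely any $h$ supported on the $\lambda_1$-eigenspace clearly pairs to zero with every $k$ since all its nonzero components sit in a coefficient-zero slot. Hence the kernel is exactly the $\lambda_1$-eigenspace, and the restriction of $\A$ to $\sob_1$ (which by definition contains no $\lambda_1$-component) is non-degenerate.

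The only genuine subtlety, rather than a real obstacle, is the passage from the finite-dimensional diagonalization intuition to the $H^1$ setting: I must ensure the eigenfunction expansion converges in $H^1$ and that both quadratic terms are continuous in this topology so that the infinite sums are legitimate. This is standard elliptic spectral theory on the compact manifold $\S^{n-1}$ (the reference \cite{chavel} already cited covers it), so the rigorous justification amounts to invoking completeness of the $\Delta$-eigenbasis in $L^2$ together with the Parseval identities for both $\|h\|_{L^2}^2$ and $\|\nabla h\|_{L^2}^2$; no new estimate is needed.
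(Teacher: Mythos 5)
Your proof is correct, but it takes a genuinely different route from the paper. The paper never expands $h$ in an eigenbasis: it characterizes membership in the kernel by the weak identity $\int_{\S^{n-1}} hk = \lambda_1^{-1}\int_{\S^{n-1}}\langle \nabla h,\nabla k\rangle$ for all $k$, then uses density of smooth functions in $\sob$ and Green's formula to rewrite this as $\int hk = \lambda_1^{-1}\int h\Delta k$ for all smooth $k$, i.e. $h=\lambda_1^{-1}\Delta h$ in the weak (hence, by elliptic regularity, smooth) sense. That argument is short and needs nothing beyond Green's formula and regularity of weak eigenfunctions. Your spectral approach instead diagonalizes $\A$ via the decomposition $h=\sum_k h_k$ and Parseval for both $\|h\|_{L^2}^2$ and $\|\nabla h\|_{L^2}^2$; the only point requiring care, which you correctly flag, is that the expansion of an $H^1$ function converges in $H^1$ so that $\|\nabla h\|_{L^2}^2=\sum_k\lambda_k\|h_k\|_{L^2}^2$ is legitimate --- standard spectral theory on a compact manifold. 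What your route buys is more than the statement itself: the diagonal formula $\A(h)=c_n\sum_k(1-\lambda_k/\lambda_1)\|h_k\|_{L^2}^2$ exhibits the full signature at once (positive on constants, zero exactly on the $\lambda_1$-eigenspace, negative definite on $\sob_{01}$), so it essentially also yields Lemma~\ref{lemme equiv distance} and Fact~\ref{signature} as corollaries, whereas the paper derives those separately from the Rayleigh quotient bound \eqref{rayleigh2}. One small remark on the last step: non-degeneracy of the restriction to $\sob_1$ needs that $\A(h,k)=0$ for all $k\in\sob_1$ (not all $k\in\sob$) already forces $h=0$ when $h\in\sob_1$; your argument of testing against each $h_j$ with $j\neq 1$ covers this, since those test functions lie in $\sob_1$.
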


\begin{proof}
Let $h \in \sob$. The function $h$ belongs to the kernel of $\A(\cdot,\cdot)$ if and only if for any $k \in \sob$ we have
$$
\int_{\S^{n-1}} hk = \lambda_1^{-1} \int_{\S^{n-1}}  \langle \nabla h, \nabla k \rangle~.
$$
By density of smooth functions on $\S^{n-1}$ for the $H^1$-norm and by Green Formula, this is equivalent to the following property: for any smooth function $k$ on $\S^{n-1}$ we have
$$
\int_{\S^{n-1}} hk = \lambda_1^{-1} \int_{\S^{n-1}} h \Delta k~,
$$
and this means $h = \lambda_1^{-1} \Delta h$ in the weak (hence smooth) sense.
\end{proof}

 We will denote by $\sob_1$ the subspace 
of $\sob$ of functions $L^2$-orthogonal 
to the eigenspace of $\lambda_1$, i.e.
\begin{eqnarray*}
\sob_1 & = & \{h\in \sob | \left( h,x^i\right)_{L^2}=0~, i=1,\ldots,n \} \\
& = & \{h\in \sob | \int_{\S^{n-1}} h(x)x \,\mbox{d}\S^{n-1}(x)=0\}~. 
\end{eqnarray*}
In turn, 	$\A$ is non-degenerate on $\sob_1$. This space has a clear geometric meaning for convex bodies.
Recall that the Steiner point of a convex body $K$ is the following point of $\R^n$:
$$\s(K)=\frac{1}{\kappa_n}\int_{\mathbb{S}^{n-1}} \supp(K)(x)x\mbox{d}\mathbb{S}^{n-1}~, $$
so that
$$\s(K)=0 \Longleftrightarrow \supp(K)\in \sob_1~.$$
We have that for any $p\in \R^n$, $\s(K+\{p\})=\s(K)+\{p\}$, hence a convex body with Steiner point at the origin is a representative of the class of this convex body up to translations.

Now we prove that $\A$ has a Lorentzian signature on $\sob_1$: it is positive in one direction, and negative-definite on the orthogonal (for a given scalar product, here the Sobolev one).
Let $\operatorname{L}$ be the line of constant functions in $\sob_1$. We denote by $\sob_{01}$ the subspace of $\sob_1$ of elements $H^1$ (or, equivalently, $L^2$) orthogonal to  $\operatorname{L}$.

\begin{lemma} \label{lemme equiv distance} For $h\in \sob_{01}$,
\begin{equation}\label{lem:laplacien}
c_{n}\left(\frac{\lambda_2-\lambda_1}{\lambda_1}\right)\|h\|_{L^2}^2 \leq -\A(h)\end{equation}
 and
\begin{equation}\label{lem:laplacienH1}
c_{n} \left( \frac{\lambda_2 - \lambda_1}{\lambda_1\lambda_2} \right) \|h\|^2_{H^1} \leq - \A(h) \leq c_{n}\frac{1}{\lambda_1} \|h\|^2_{H^1}~.
\end{equation}
\end{lemma}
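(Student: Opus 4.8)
The plan is to work entirely on the subspace $\sob_{01}$ and exploit the spectral decomposition of the Laplacian, using the Rayleigh-type estimate \eqref{rayleigh2} as the central tool. For $h\in\sob_{01}\setminus\{0\}$ we have the fundamental inequality $\lambda_2\|h\|_{L^2}^2\leq\|\nabla h\|_{L^2}^2$, and I would immediately record the trivial complementary bound coming from the fact that $h$ is $L^2$-orthogonal to the first two eigenspaces. Since the spectral expansion of $h$ only involves eigenfunctions with eigenvalue $\geq\lambda_2>\lambda_1$, both a lower bound $\lambda_2\|h\|_{L^2}^2\leq\|\nabla h\|_{L^2}^2$ and the relation between the two norms will follow from Parseval.

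First I would prove \eqref{lem:laplacien}. Starting from the definition \eqref{eq def A}, write
\[
-\A(h)=c_n\left(\lambda_1^{-1}\|\nabla h\|_{L^2}^2-\|h\|_{L^2}^2\right).
\]
Using \eqref{rayleigh2} in the form $\|h\|_{L^2}^2\leq\lambda_2^{-1}\|\nabla h\|_{L^2}^2$ is not quite what I want; instead I substitute $\|\nabla h\|_{L^2}^2\geq\lambda_2\|h\|_{L^2}^2$ directly into the first term, giving
\[
-\A(h)\geq c_n\left(\lambda_1^{-1}\lambda_2\|h\|_{L^2}^2-\|h\|_{L^2}^2\right)=c_n\frac{\lambda_2-\lambda_1}{\lambda_1}\|h\|_{L^2}^2,
\]
which is exactly \eqref{lem:laplacien}. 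This step is essentially immediate once the Rayleigh inequality is in place.

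For \eqref{lem:laplacienH1} I need to convert between the $L^2$ and $H^1$ norms. By definition $\|h\|_{H^1}^2=\|h\|_{L^2}^2+\|\nabla h\|_{L^2}^2$. For the right-hand (upper) bound, I note $-\A(h)=c_n(\lambda_1^{-1}\|\nabla h\|_{L^2}^2-\|h\|_{L^2}^2)\leq c_n\lambda_1^{-1}\|\nabla h\|_{L^2}^2\leq c_n\lambda_1^{-1}\|h\|_{H^1}^2$, since $\lambda_1^{-1}\leq 1$ would let me absorb the $\|h\|_{L^2}^2$ term as well—more carefully, $\lambda_1^{-1}\|\nabla h\|_{L^2}^2\leq\lambda_1^{-1}(\|\nabla h\|_{L^2}^2+\|h\|_{L^2}^2)=\lambda_1^{-1}\|h\|_{H^1}^2$ gives the upper bound cleanly. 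For the left-hand (lower) bound I would combine the $L^2$ lower bound from \eqref{lem:laplacien} with the Rayleigh inequality to also control $\|\nabla h\|_{L^2}^2$ by $-\A(h)$: from \eqref{rayleigh2}, $\|\nabla h\|_{L^2}^2\leq\frac{\lambda_2}{\lambda_2-\lambda_1}\cdot\frac{\lambda_1}{c_n}(-\A(h))$ after rescaling, so that $\|h\|_{H^1}^2=\|h\|_{L^2}^2+\|\nabla h\|_{L^2}^2$ is bounded by a multiple of $-\A(h)$, and bookkeeping the constants produces the factor $\frac{\lambda_2-\lambda_1}{\lambda_1\lambda_2}$.

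The main obstacle is not conceptual but arithmetic: getting the exact constants $\frac{\lambda_2-\lambda_1}{\lambda_1\lambda_2}$ and $\frac{1}{\lambda_1}$ to come out correctly requires careful tracking of how $-\A(h)$ dominates both $\|h\|_{L^2}^2$ and $\|\nabla h\|_{L^2}^2$ simultaneously, then summing these two contributions to recover the $H^1$-norm. The cleanest route is probably to express everything in terms of $\|\nabla h\|_{L^2}^2$, using $\|h\|_{L^2}^2\in[\lambda_2^{-1}\|\nabla h\|_{L^2}^2\cdot(\text{something}),\ \lambda_2^{-1}\|\nabla h\|_{L^2}^2]$ via the spectral bound, and then verifying the claimed coefficients by a direct substitution. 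As a sanity check I would confirm the upper and lower constants are consistent (the lower constant is smaller than the upper one precisely because $\frac{\lambda_2-\lambda_1}{\lambda_2}<1$), which guarantees the two-sided estimate is non-vacuous.
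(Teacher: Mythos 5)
Your overall strategy coincides with the paper's: you obtain \eqref{lem:laplacien} and the right-hand inequality of \eqref{lem:laplacienH1} exactly as in the printed proof, by substituting the Rayleigh bound \eqref{rayleigh2} into the definition \eqref{eq def A}, and for the left-hand inequality of \eqref{lem:laplacienH1} you, like the paper, bound $\|h\|_{L^2}^2$ and $\|\nabla h\|_{L^2}^2$ separately by multiples of $-\A(h)$ and then sum. Those first two items are correct and complete.

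The problem is your final claim that ``bookkeeping the constants produces the factor $\frac{\lambda_2-\lambda_1}{\lambda_1\lambda_2}$'': it does not. Your two estimates are $\|h\|_{L^2}^2\leq \frac{\lambda_1}{c_n(\lambda_2-\lambda_1)}(-\A(h))$ and $\|\nabla h\|_{L^2}^2\leq \frac{\lambda_1\lambda_2}{c_n(\lambda_2-\lambda_1)}(-\A(h))$, and summing them yields
$$c_n\,\frac{\lambda_2-\lambda_1}{\lambda_1(1+\lambda_2)}\,\|h\|_{H^1}^2\ \leq\ -\A(h)~,$$
which, since $1+\lambda_2>\lambda_2$, is strictly weaker than the stated bound. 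No rearrangement can recover the stated constant, because the left-hand inequality of \eqref{lem:laplacienH1} is in fact false as written: for $h$ a $\lambda_2$-eigenfunction with $\|h\|_{L^2}=1$ one has $\|h\|_{H^1}^2=1+\lambda_2$ and $-\A(h)=c_n\frac{\lambda_2-\lambda_1}{\lambda_1}$, so the stated inequality would force $1+\lambda_2\leq\lambda_2$. (The paper's own proof has the same defect: adding two inequalities whose right-hand sides are both $-\A(h)$ produces $-2\A(h)$, hence only the constant $\frac{c_n}{2}\frac{\lambda_2-\lambda_1}{\lambda_1\lambda_2}$.) This is harmless for everything downstream, since only the equivalence of $-\A$ with $\|\cdot\|_{H^1}^2$ on $\sob_{01}$ is ever used; but you should state the constant your computation actually delivers, namely $\frac{\lambda_2-\lambda_1}{\lambda_1(1+\lambda_2)}$, which a spectral decomposition shows is moreover the sharp one.
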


\begin{proof}
The space $\operatorname{L}$ is exactly the eigenspace of the zero eigenvalue of the spherical Laplacian. If we denote by $\lambda_2(>\lambda_1)$ the second positive eigenvalue, then 
by Rayleigh Theorem,
 for $h\in \sob_{01}\setminus \{0\}$ we have
\begin{equation}\label{rayleigh2}\lambda_2 \leq \frac{ \| \nabla h \|_{L^2}^2}{\|h\|_{L^2}^2}~.\end{equation}

Now \eqref{lem:laplacien} is immediate from
\eqref{rayleigh2}, and
the right-hand side inequality in \eqref{lem:laplacienH1} follows from 
$$-\A(h)\leq c_{n}\lambda_1^{-1}\|\nabla h\|^2_{L^2}\leq c_{n} \lambda_1^{-1}\|h\|^2_{H^1}~. $$
The left-hand side inequality in \eqref{lem:laplacienH1} follows by adding the two following inequalities:
as $\lambda_2>\lambda_1=n-1 \geq 1$, \eqref{lem:laplacien} gives
$$c_{n}\frac{1}{\lambda_2}\left(\frac{\lambda_2-\lambda_1}{\lambda_1}\right)\|h\|^2_{L^2}\leq -\A(h)~, $$
and
on the other hand, using again \eqref{rayleigh2}, the equality \eqref{eq def A} gives
$$c_{n}\left(\frac{1}{\lambda_1 }-\frac{1}{\lambda_2}\right)\|\nabla h\|_{L^2}^2\leq -\A(h)~. $$

\end{proof}
 
Clearly $\A$ is positive definite on $\operatorname{L}$, and we have:

\begin{proposition}\label{prop:hilbert}
$(\sob_{01},-\A(\cdot,\cdot))$ is a separable Hilbert space.
\end{proposition}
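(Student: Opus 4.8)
The plan is to verify the two Hilbert-space axioms separately: that $-\A(\cdot,\cdot)$ is a genuine inner product on $\sob_{01}$, and that $\sob_{01}$ is complete and separable for the associated norm. The key observation, already prepared by Lemma~\ref{lemme equiv distance}, is that the norm induced by $-\A$ is equivalent to the ambient $H^1$-norm on $\sob_{01}$; once this equivalence is in hand, completeness and separability will be inherited from the corresponding properties of $\sob$.

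First I would check that $-\A(\cdot,\cdot)$ is an inner product on $\sob_{01}$. Symmetry and bilinearity are clear from the definition of $\A(\cdot,\cdot)$. Positive definiteness is exactly the content of the left-hand inequality in \eqref{lem:laplacienH1}: for $h\in\sob_{01}\setminus\{0\}$ we have $\|h\|_{H^1}>0$, whence $-\A(h)\geq c_{n}\bigl(\tfrac{\lambda_2-\lambda_1}{\lambda_1\lambda_2}\bigr)\|h\|^2_{H^1}>0$ since $\lambda_2>\lambda_1$ and $c_n>0$. Thus $-\A$ is positive definite, and it induces a norm $\|h\|_{-\A}:=\sqrt{-\A(h)}$ on $\sob_{01}$.

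Next I would establish completeness. The two-sided estimate \eqref{lem:laplacienH1} says precisely that there are positive constants $\alpha,\beta$ with $\alpha\|h\|_{H^1}\leq \|h\|_{-\A}\leq \beta\|h\|_{H^1}$ for all $h\in\sob_{01}$, so the norm $\|\cdot\|_{-\A}$ is equivalent to the restriction of $\|\cdot\|_{H^1}$ to $\sob_{01}$. Equivalent norms have the same Cauchy sequences and the same convergent sequences, so it suffices to show $\sob_{01}$ is $H^1$-complete, i.e. closed in $\sob$. But $\sob_{01}=\sob_0\cap\sob_1$ is the intersection of the $L^2$-orthogonality conditions $(h,1)_{L^2}=0$ and $(h,x^i)_{L^2}=0$ for $i=1,\dots,n$; each of these is a closed linear condition on $\sob$ because the functionals $h\mapsto(h,f)_{L^2}$ are $H^1$-continuous (as $\|\cdot\|_{L^2}\leq\|\cdot\|_{H^1}$). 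Hence $\sob_{01}$ is a closed subspace of the Hilbert space $(\sob,\|\cdot\|_{H^1})$, so it is complete for $\|\cdot\|_{H^1}$ and therefore also for the equivalent norm $\|\cdot\|_{-\A}$.

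Finally, separability of $(\sob_{01},\|\cdot\|_{-\A})$ follows from separability of $\sob$: the space $\sob$ is separable (it is a standard Sobolev space on a compact manifold, with an orthonormal basis of Laplace eigenfunctions), every subspace of a separable metric space is separable, and passing to the equivalent norm $\|\cdot\|_{-\A}$ does not affect the topology, hence preserves separability. I do not expect a genuine obstacle here: the whole statement is essentially a packaging of Lemma~\ref{lemme equiv distance} together with the closedness of $\sob_{01}$, and the only point requiring a word of care is recording that the orthogonality constraints defining $\sob_{01}$ are $H^1$-closed so that completeness transfers.
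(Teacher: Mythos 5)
Your proposal is correct and follows essentially the same route as the paper: positive definiteness from Lemma~\ref{lemme equiv distance}, closedness of $\sob_{01}$ in $(\sob,\|\cdot\|_{H^1})$ to get completeness and separability, and the norm equivalence \eqref{lem:laplacienH1} to transfer these to $-\A$. You merely spell out in more detail why the $L^2$-orthogonality constraints are $H^1$-closed, which the paper leaves implicit.
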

\begin{proof}
By \eqref{lem:laplacien} or \eqref{lem:laplacienH1}, 
$-\A$ is a scalar product on $\sob_{01}$. 
As $\sob_{01}$ is orthogonal  to a vector subspace, it is a closed subspace, hence complete and separable for the $H^1$ norm.  The result follows from \eqref{lem:laplacienH1}.
  \end{proof}

Note that as $\A$ is Lorentzian on $\sob_1$, we obtain the reversed Cauchy--Schwarz Inequality, that generalizes  Alexandrov--Fenchel Inequality \ref{alex fench}:
\begin{equation}\label{RCS}\A(h,k)^2 \geq \A(h)\A(k)~, \end{equation} 
for  $h,k\in \mathcal{C}_n$ with 
(see Figure~\ref{fig:notations})
\begin{equation*}\label{def cone}\mathcal{C}_n=\{h\in \sob_1 | \A(h)> 0, \Vb(h)>0 \}~,\end{equation*}
and where $$\Vb(h)=\frac{1}{\kappa_{n-1}}\int_{\S^{n-1}}h~,$$
and equality occurs in  \eqref{RCS} if and only if $h=\lambda k$, $\lambda >0$.

Let us mention that it is known that, for a convex body $K\subset \R^n$, if $V_1(K)$ is given by \eqref{steiner}, then
$$V_1(K)=
\Vb(\supp(K))~.$$

\subsection{Infinite dimensional hyperbolic space}

Let us introduce \begin{equation*}\label{hinf}
\mathcal{H}_n^\infty=\{h \in \mathcal{C}_n | \A(h)=1 \}~.
\end{equation*}

As the Hilbert structure on $\sob_{01}$ is given by $\A$, the map $\A$ is smooth, and it is easy to see that $\mathcal{H}_n^\infty$ is the graph of a smooth map over $\sob_{01}$, hence an infinite dimensional smooth manifold. 
We implicitly endow $\mathcal{H}_n^\infty$ with the restriction of $-\A( \cdot,\cdot)$ on its tangent spaces.
The intersection of $\mathcal{H}_n^\infty$ with any vector subspace of finite dimension $p$ of $\sob_1$ containing a vector of $\mathcal{C}_n$, is clearly a hyperboloid model of the hyperbolic space of dimension $(p-1)$. In turn,
$\mathcal{H}_n^\infty$  is a Riemannian manifold of constant sectional curvature $-1$. 
Moreover, it is not hard to see that the map $\mathbf{p}_{\mathcal{H}} : \mathcal{H}_n^\infty \rightarrow \sob_{01}$,
 $\mathbf{p}_{\mathcal{H}}(h)=h-\frac{\Vb(h)}{\Vb(1)}$ is bijection and locally bi-Lipschitz, so by Proposition~\ref{prop:hilbert}, $\mathcal{H}_n^\infty$ is complete.

Let us denote by  $d_{\mathcal{H}}$ the distance induced by the Riemannian structure, and we have, in the same way than  in the finite dimensional case,
\begin{equation*}\label{eq: dist hyperboloide}d_{\mathcal{H}}(h,k)=\operatorname{argch} \A\left(h,k \right)~. \end{equation*}  

We will also need the pull-back of the distance on the hyperboloid onto $$\Kl_n=\{h \in \mathcal{C}_n ~|~ \Vb(h)=1\}~$$ 
via a central projection, i.e. the hyperbolic distance on $\Kl_n$ is defined by
\begin{equation}\label{eq:dk dh}
d_{\mathbb{K}}(h,k) :=  d_{\mathcal{H}}(\A(h)^{-1/2}h,\A(k)^{-1/2}k)~.%=\operatorname{argch}\left( \frac{\A(h,k)}{\sqrt{\A(h) \A(k)}} \right)~.
\end{equation}

Of course it is possible to write $d_{\mathbb{K}}$ in an intrinsic way, as we did in  Section~\ref{sec intr area} for the area distance, using \eqref{RCS} instead of \ref{alex fench}.
For future references let us note the  following non-surprising facts, whose proofs are left to the reader.

\begin{fact} \label{cor topo hyperboloid}
 On $\mathcal{H}_n^\infty$, $d_\mathcal{H}$ and $d_{H^1}$ induce the same topology, where $d_{H^1}$ is the distance induced by $\|\cdot\|_{H^1}$.
\end{fact}

\begin{fact}\label{lem bord infini}
Let $h_i, k \in \Kl_n$. Then
$$\A(h_i)\to 0 \iff d_\mathbb{K}(h_i,k)\to +\infty~.$$
\end{fact}

\begin{fact}\label{lem conv klein}
Let $(h_i)_i$ converge to $h$ in $(\Kl_n,d_{\mathbb{K}})$. Then $\A(h-h_i)\to 0$.
\end{fact}

\begin{fact} \label{prop equiv topo Klein}
On $\Kl_n$, $d_\mathbb{K}$ and $d_{H^1}$ induce the same topology.
\end{fact}

 \subsection{Spaces of convex bodies}

Recall that $\mathcal{K}^n$ (resp. $\mathcal{K}^{n*}$) is the set of convex bodies in $\R^n$ (resp. convex bodies with positive intrinsic area). We denote by  $\mathcal{K}^n_S$  the space of convex bodies with Steiner point at the origin, and  $\mathcal{K}^{n*}_S=\mathcal{K}^n_S\cap \mathcal{K}^{n*}$. 

In the sequel,  a star as upper-script mean that we consider only convex bodies with positive intrinsic area (that is, we exclude points and segments). In the following table, it is obvious that all the sets without a star are in bijection, as well as all the sets with a star.

\begin{center}
\begin{tabular}{|c|c|c|c|}
\hline
\textbf{convex bodies} & \textbf{up to positive} &  \textbf{with $V_2=1$} & \textbf{with $V_1=1$} \\
\textbf{in $\R^n$...} & \textbf{scaling} &  & \\
\hline
\textbf{up to translations} & $\oshape^n$ and $\oshape^{n*}$ & &\\
\hline	
\textbf{with Steiner point} & & $\mathcal{K}^n_{SV_2}$ and $\mathcal{K}^{n*}_{SV_2}$ & $\mathcal{K}^n_{SV_1}$ and $\mathcal{K}^{n*}_{SV_1}$ \\
\textbf{at the origin} & & & \\
\hline
\end{tabular}
\end{center}
We have 
\begin{center}
\begin{tabular}{ c c c}
$\supp(\mathcal{K}_S^{n*}) \subset \mathcal{C}_n$, & $\supp(\mathcal{K}^{n*}_{SV_2}) \subset \mathcal{H}^\infty_n$, & $\supp(\mathcal{K}^{n*}_{SV_1}) \subset \Kl_n$~. \\
\end{tabular}
\end{center}
Clearly, $\mathcal{K}^{n*}_{SV_2}$ (resp.   $\mathcal{K}^{n*}_{SV_1}$) is in bijection with $\oshape^{n*}$, and we denote by  $d_{SV_2}$ (resp. $d_{SV_1}$) the pull-back of $\d$ on  $\mathcal{K}^{n*}_{SV_2}$ (resp.   $\mathcal{K}^{n*}_{SV_1}$).
By construction, the map $\supp$ defines isometries
\begin{eqnarray*}
(\mathcal{K}^{n*}_{SV_2},d_{SV_2}) &\overset{\sim}\longrightarrow &(\supp(\mathcal{K}^{n*}_{SV_2}),d_\mathcal{H})~, \\
(\mathcal{K}^{n*}_{SV_1},d_{SV_1})& \overset{\sim}\longrightarrow &(\supp(\mathcal{K}^{n*}_{SV_1}),d_\mathbb{K})~,
\end{eqnarray*}
and as all these sets are isometric to $(\oshape^{n*},\d)$. We immediately obtain some parts of  Theorems~\ref{main} and \ref{main1b}:
 $(\oshape^{n*},\d)$ is a metric space, isometric to a convex subset of $\mathbb{H}^\infty_n$. In turn, it has curvature  $\leq -1$  and $\geq -1$, as this is clearly true for its isometric image in the hyperbolic space, and it is a uniquely geodesic metric space, as the hyperbolic space is uniquely geodesic. The unique shortest path is the convex combination,  as the property occurs in $\Kl_n$.

Let us check two easy facts that give other parts of Theorems~\ref{main} and \ref{main1b}. The first one implies that 
 $\supp(\mathcal{K}^{n*}_{SH})$  is unbounded.
\begin{fact} $\supp(\mathcal{K}^{n*}_{SH})$ contains an entire geodesic of $\mathbb{H}_n^\infty$.\end{fact}
\begin{proof} In the plane, consider the following segments: $K_1=[-1,1]\times\{0\}$ and $K_2=\{0\} \times [-1,1]$. For $0\leq t \leq 1$, the convex combination $(1-t)K_1+t K_2$ is the rectangle $[-(1-t),1-t] \times [-t,t]$, whose Steiner point is $0$. This gives an entire geodesic of $\mathbb{H}_2^\infty$ contained in $\supp(\mathcal{K}^{2*}_{SH})$. \end{proof}

The following fact implies that $(\oshape^{n*},\d)$ has infinite Hausdorff dimension.

\begin{fact}
For any $s \in \N$, there is an open ball of the finite dimensional hyperbolic space $\mathbb{H}^s$ that isometrically embeds into  $(\oshape^{n*},\d)$.
\end{fact}
\begin{proof}
The convex hyperbolic polyhedra  constructed  
in \cite{BG} parametrize the similarity classes of convex polygons with fixed angles; by construction, they isometrically
embed into $(\oshape^{n*},\d)$. 
The dimension of the hyperbolic polyhedra is $(s-3)$ if the polygons have $s$ edges.  
\end{proof}

\begin{fact}
The boundary of $(\oshape^{n*},\d)$ is homeomorphic to the real projective space of dimension $(n-1)$
\end{fact}

\begin{proof}
The boundary is the space of segments, up to  homotheties: indeed, for example by looking at the isometric model $(\supp(\mathcal{K}_{SV_1}^{n*}), d_{\mathbb{K}})$, we see that the convex bodies $K$ on the boundary are the one for which $V_2(K)=0$ (see Fact \ref{lem bord infini}) and $V_1(K)=1$, and these are exactly unit length segments. Hence $\partial \oshape^{n*}$ is in bijection with $P^{n-1}(\R)$, the real projective space of dimension $n-1$ (that is, the space of lines in $\R^n$).

We can endow $\partial \oshape^{n*}$ with the visibility metric from $[B^n]$: the distance between $a,b \in \partial \oshape^{n*}$, denoted by $<_B(a,b)$,   is the angle (with value in $[0,\pi]$) between the two lines $c_a$ and $c_b$ from $[B^n]$ and with endpoints $a$ and $b$ respectively. But clearly,
the element of $O(n)$ sending the line $a$ to the line $b$  is also a $\d$-isometry sending $c_a$ to $c_b$. In turn, $\partial \oshape^{n*}$ endowed with the visibility metric is isometric to $P^{n-1}(\R)$ endowed with its round metric. 
From \cite[Proposition II.9.2]{BH}, $<_B:\partial \oshape^{n*} \times \partial \oshape^{n*} \to \R$ is continuous for the classical topology on $\partial \oshape^{n*}$. Hence for this topology, $\partial \oshape^{n*}$ is homeomorphic to $P^{n-1}(\R)$.

\end{proof}

In the two following sections we will prove the two remaining parts of Theorems~\ref{main} and \ref{main1b}: the assertion about terminal points of segments, and the topological properties.

\subsection{Terminal points of segments}\label{sec terminal}

Let $K_1,K_2 \in \mathcal{K}_{SV_1}^{n}$. The \emph{segment} between 
$K_1$ and $K_2$ is $\{(1-t) K_1+ t K_2, t \in [0,1]\}$.
We say that  $K_1\in \mathcal{K}_{SV_1}^{n}$ is a \emph{terminal point} of the segment 
if for any $t<0$, $(1-t)\supp(K_1)+t\supp(K_2) \notin \supp( \mathcal{K}_{SV_1}^{n})$. 
An \emph{extreme point} $K$ of  $\mathcal{K}_{SV_1}^{n}$ is 
such that there does not exist $K_1,K_2\in  \mathcal{K}_{SV_1}^{n}$, $K_1\not=K_2$, and $t\in(0,1)$ such that $\supp(K)=(1-t)\supp(K_1)+t\supp(K_2)$. 
In the plane, extreme points of $ \mathcal{K}_{SV_1}^{2}$ are
segments and triangles \cite[Theorem 3.2.14]{schneider}.
For $n \geq 3$, extreme points of  $ \mathcal{K}_{SV_1}^{n}$ are dense for the Hausdorff distance \cite[3.2.18]{schneider}. 

Clearly, an extreme point is a terminal point for all the segments ending at this point. But there are much more terminal points. For example, one can find convex bodies with a non smooth point on the boundary (i.e. a point of the convex body contained in more than one support plane) which are terminal points for the segment 
starting at the unit ball ---this idea is illustrated in Figure~\ref{fig:extreme}. 

\begin{figure}
\begin{center}
\psfrag{K}{$K$}
\psfrag{e}{$\epsilon$}
\psfrag{K+}{$K+\epsilon B^2$}
\psfrag{K-}{$K-\epsilon B^2$}
\includegraphics[width=0.4\linewidth]{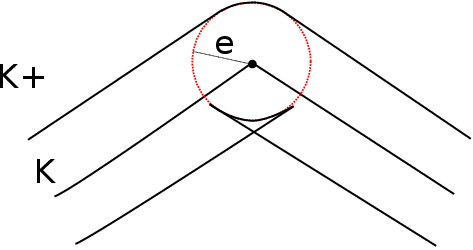}\caption{
If a plane convex body $K$ has a non-smooth point, then for any $\epsilon >0$, $\supp(K)+\epsilon \supp(B^2)$ is the support function of a convex body, while $\supp(K)-\epsilon \supp(B^2)$ is not.}\label{fig:extreme}
\end{center}
\end{figure}

In this section, we will use a different argument to prove
 that \emph{any} convex body is the terminal point of some segment (Proposition \ref{propositionpointsterminaux}), see Figure~\ref{fig:terminaux} for an example.

\begin{figure}
\begin{center}
\includegraphics[width=0.8\linewidth]{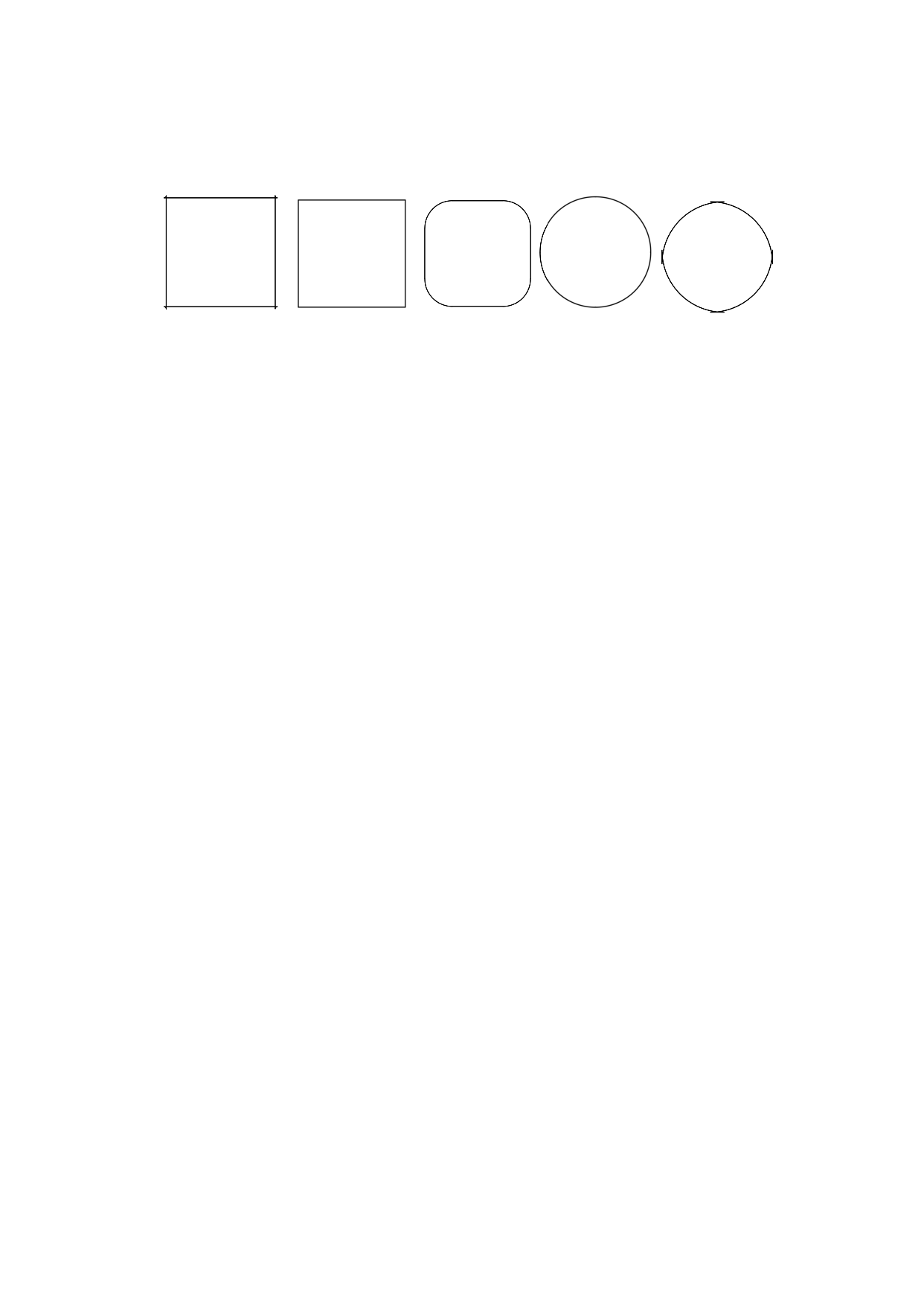}\caption{The disc and the square
are both terminal points of the segment joining them.}\label{fig:terminaux}
\end{center}
\end{figure}

If a function $h\in \Kl_n$ belongs to $\supp(\mathcal{K}_{SV_1}^{n*})$, then 
 its one-homogeneous extension  $\tilde{h}$ is convex, hence has non-negative Laplacian in the weak sense.  This means that for every non-negative function $\varphi \in C^\infty_c (\R^n)$, we have
\[
\int_{\R^n} \tilde{h}(x) \Delta_e \varphi(x)\mbox{d}x \geq 0~,
\]
where $C^\infty_c (\R^n)$ is the set of smooth functions with compact support in $\R^n$.

For $1 \leq p < n$, we will denote by $B_{p,n}$ the $p$-dimensional ball with radius $r_1(p)$ in $\R^n$, which is the set of points $x\in \R^n$  with $x_1^2 +\cdots+ x_p^2 \leq r_1(p)^2$ and $x_{p+1}= \cdots=x_n=0$. 
The number $r_1(p)$ is such that a ball with such radius has $V_1=1$.
We have $V_1(B_{p,n})=1$, hence $B_{p,n} \in \mathcal{K}_{SV_1}^{n}$ (note that $B_{p,n} \in \mathcal{K}_{SV_1}^{n*}$ if and only if $p \geq 2$).
 Let $b_{p,n} = \supp(B_{p,n}) \in \supp(\mathcal{K}_{SV_1}^{n})$ and 
let $\widetilde{b_{p,n}}(x)=r_1(p) \sqrt{x_1^2+\cdots+x_p^2}$ be the 1-homogeneous extension of $b_{p,n}$ (if $p=1$, then $\widetilde{b_p}(x)=r_1(1)|x_1|=\frac{|x_1|}{2}$). 

%\begin{example}[The $n-$dimensional ball is the terminal point of a segment]\label{exemplepointserminaux}{\rm
%Assume $n \geq 3$, and let $p \in \mathbb{N}$ be such that $2 \leq p<n$. Let $K$ be the $n-$dimensional ball with radius $r_1(n)$ in $\R^n$: we have $V_1(K)=1$, hence $K \in \mathcal{K}_{SV_1}^{n}$, and let $k = \supp(K) \in \supp(\mathcal{K}_{SV_1}^{n})$ be its support function. 
% Then $k$ is the terminal point of the segment starting at $b_{p,n}$.
%
%Indeed, the 1-homogeneous extension of the support function of $K$ is $$\tilde{k}(x)=r_1(n) \sqrt{x_1^2 +\cdots+x_n^2}~,$$
%and $\Delta_e \tilde{k}(x)=\frac{r_1(n)(n-1)}{\sqrt{x_1^2 + \cdots + x_n^2}}$. Also  $\Delta_e \widetilde{b_{p,n}}(x)=\frac{r_1(p)(p-1)}{\sqrt{x_1^2 + \cdots + x_p^2}}$.
%  Now let $\epsilon>0$ and consider the point $x_\epsilon \in \R^n$ such that $x_1=\cdots=x_p=\epsilon$ and $x_{p+1}=\cdots= x_n=1$. For $t<0$ we have
%\[
%\Delta_e((1-t)\tilde{k} + t\widetilde{b_{p,n}})(x_\epsilon) = \frac{(1-t)r_1(n) (n-1)}{\sqrt{p \epsilon^2 + (n-p)}} + \frac{t r_1(p) (p-1)}{\sqrt{p}\epsilon}~,
%\]
%and this quantity goes to $-\infty$ when $\epsilon$ goes to zero. This shows that for $t<0$, $(1-t)k + t b_{p,n} \notin \supp(\mathcal{K}_{SV_1}^{n})$. 
%}\end{example}

%By using the same arguments we obtain the following.

\begin{proposition} \label{propositionpointsterminaux}
Let $p \in \mathbb{N}$ such that $1 \leq p < n$. Then any $K \in \mathcal{K}_{SV_1}^{n*}$ is the terminal point of a segment in $\mathcal{K}_{SV_1}^{n*}$, which starts at some embedded $p-$dimensional ball in $\R^n$.
\end{proposition}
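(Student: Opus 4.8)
The plan is to realize $K$ as a terminal point by exactly the mechanism of Example~\ref{exemplepointserminaux}: attach a $p$-dimensional ball in a cleverly chosen $p$-plane $P$ and show the segment cannot be prolonged past $K$. Fix coordinates with $P=\mathrm{span}(e_1,\dots,e_p)$ and take the normalized ball $B_{p,n}\subset P$ of radius $r_1(p)$, so $V_1(B_{p,n})=1$ and its Steiner point is the origin. Since $V_1$ and the Steiner point are affine along Minkowski combinations, each $(1-t)K+tB_{p,n}$ lies in $\mathcal{K}_{SV_1}^{n}$; and for $t\in[0,1)$ it lies in $\mathcal{K}_{SV_1}^{n*}$ because $V_2((1-t)K+tB_{p,n})\geq (1-t)^2V_2(K)>0$ (the endpoint $t=1$ also has positive intrinsic area when $p\geq 2$). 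Thus everything reduces to showing that for every $t<0$ the function $(1-t)\supp(K)+t\supp(B_{p,n})$ is \emph{not} a support function, which by Fact~\ref{fact:ext supp} follows once we show that the distributional Laplacian $\Delta_e$ of its $1$-homogeneous extension fails to be a non-negative measure.

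Next I would record the two relevant measures. The extension $\widetilde{b_{p,n}}(x)=r_1(p)\sqrt{x_1^2+\cdots+x_p^2}$ has, as in the example, $\mu_B:=\Delta_e\widetilde{b_{p,n}}$ equal to the non-negative measure of density $\tfrac{r_1(p)(p-1)}{\rho}$ when $p\geq 2$ (with $\rho$ the distance to $P^\perp$), and equal to the $(n-1)$-dimensional Hausdorff measure carried by the hyperplane $P^\perp$ when $p=1$. In both cases $\mu_B$ is genuinely singular along $P^\perp$: for every $x_0\in P^\perp\cap\mathbb{S}^{n-1}$ one gets a lower bound $\mu_B(B(x_0,\delta))\geq c\,\delta^{n-1}$ for small Euclidean balls $B(x_0,\delta)$, with $c>0$. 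On the other hand $\mu_K:=\Delta_e\widetilde{\supp(K)}$ is a locally finite non-negative measure, which is $(-1)$-homogeneous. The Laplacian of the combination is then the signed measure $(1-t)\mu_K+t\mu_B$, with $1-t>0$ and $t<0$.

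The key step, and the main obstacle compared with the smooth situation of the example, is to choose $P$ so that $\mu_K$ does not concentrate near $P^\perp$ as strongly as $\mu_B$ does. Here I would use that a locally finite measure has vanishing $s$-dimensional density almost everywhere whenever $s<n$: writing $\mu_K(B(x_0,\delta))/\delta^{n-1}=\bigl(\mu_K(B(x_0,\delta))/\delta^{n}\bigr)\,\delta$ and using that the $n$-density of $\mu_K$ is finite at almost every point, one gets $\mu_K(B(x_0,\delta))=o(\delta^{n-1})$ for almost every $x_0$; by $(-1)$-homogeneity this good set is a cone, so such an $x_0$ can be chosen on $\mathbb{S}^{n-1}$. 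Let $P$ be any $p$-plane contained in $x_0^\perp$ (possible since $p\leq n-1$), so that $x_0\in P^\perp$. Then for each fixed $t<0$,
$$\bigl((1-t)\mu_K+t\mu_B\bigr)\bigl(B(x_0,\delta)\bigr)\leq (1-t)\,o(\delta^{n-1})-|t|\,c\,\delta^{n-1}<0$$
for $\delta$ small enough, so $(1-t)\mu_K+t\mu_B$ assigns negative mass to $B(x_0,\delta)$ and cannot be a non-negative measure. Hence the combination is not a support function, and $K$ is the terminal point of the segment joining it to the embedded $p$-ball $B_{p,n}\subset P$. When $K$ is smooth this recovers the pointwise computation of Example~\ref{exemplepointserminaux}; the passage to the measure-theoretic comparison is precisely what makes the argument valid for an arbitrary $K\in\mathcal{K}_{SV_1}^{n*}$.
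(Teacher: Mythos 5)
Your proof is correct, and while it shares the paper's overall architecture (reduce terminality to the non-convexity of the one-homogeneous extension $(1-t)\tilde k+t\widetilde{b_{p,n}}$ for $t<0$, select a ``generic'' point of the sphere, and arrange for $P^\perp$ to pass through it), it replaces the key technical ingredient by a genuinely different one. The paper invokes Alexandrov's theorem on almost-everywhere second-order differentiability of convex functions: at such a point $\bar x$ it tests against mollifiers $\varphi_\epsilon$ and shows that $\int\tilde k\,\Delta_e\varphi_\epsilon$ stays bounded (the quadratic part contributes a constant and the remainder is $O(1)$ by the second-order Taylor estimate), whereas $\int\widetilde{b_{p,n}}\,\Delta_e\varphi_\epsilon$ blows up like $1/\epsilon$. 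You instead work directly with the non-negative Radon measure $\mu_K=\Delta_e\widetilde{\supp(K)}$ and use the Lebesgue--Besicovitch differentiation theorem to produce a point with $\mu_K(B(x_0,\delta))=o(\delta^{n-1})$, then use the $(-1)$-homogeneity of $\mu_K$ (so the good set is a full-measure cone) to place $x_0$ on $\S^{n-1}$; comparing with the lower bound $\mu_B(B(x_0,\delta))\geq c\,\delta^{n-1}$ along $P^\perp$ destroys non-negativity of $(1-t)\mu_K+t\mu_B$ on small balls. The two generic-point selections are not literally the same condition (finite upper $n$-density neither implies nor is implied by Alexandrov second-order differentiability), but both hold almost everywhere and both deliver the same $o(\delta^{n-1})$ versus $\delta^{n-1}$ dichotomy. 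Your route is arguably more economical, since the differentiation theorem for Radon measures is lighter machinery than Alexandrov's theorem, and it makes the geometric mechanism transparent: the ball's area measure concentrates at codimension-one rate along $P^\perp$, which a generic point of an arbitrary convex body cannot match. The paper's mollifier computation is in exchange more explicit and self-contained. One cosmetic remark: for $p\geq 2$ the measure $\mu_B$ is in fact absolutely continuous (its density $r_1(p)(p-1)/\rho$ is locally integrable), so ``genuinely singular along $P^\perp$'' should be read only as the quantitative lower bound $\mu_B(B(x_0,\delta))\geq c\,\delta^{n-1}$ --- which is all your argument uses, and which is correct in both cases $p=1$ and $p\geq 2$.
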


Actually the proof will show that there are infinitely many such segments.
If $p=1$, this ball is in fact a segment and lies on the boundary of $\Kl_n$.

To prove Proposition \ref{propositionpointsterminaux}, we need the following theorem due to Alexandrov (see \cite{hessian}).
\begin{theorem}\label{alex thm}
A convex function $f : \R^n \rightarrow \R$ is \emph{twice differentiable} at almost every $\bar{x} \in \R^n$, which means that for almost every $\bar{x} \in \R^n$, there exists a quadratic polynomial $Q_{\bar{x}}$, and a function $R_{\bar{x}}$, such that
\[
f(x) = Q_{\bar{x}}(x) + R_{\bar{x}}(x) \mbox{ and } \lim_{u \rightarrow 0} \frac{R_{\bar{x}}(\bar{x}+u)}{\|u\|^2}=0~.
\]
\end{theorem}

\begin{proof}[Proof of Proposition \ref{propositionpointsterminaux}]
Let $k = \supp(K) \in \supp(\mathcal{K}_{SV_1}^{n*})$, and let $\tilde{k}$ be its 1-homogeneous extension. Let $\bar{x} \in \R^n$ be a point at which $\tilde{k}$ is twice differentiable, and let $Q_{\bar{x}}$ and $R_{\bar{x}}$ be as in Theorem~\ref{alex thm}. Since $n >p$, the vector space $\{x_1=\cdots=x_p=0\}$ has positive dimension, hence, up to a rotation of $K$, we may assume that the first components of $\bar{x}$ are $\bar{x}_1=\cdots=\bar{x}_p=0$.

Let $\varphi \in C^\infty_c (\R^n)$ be a non-negative function, with support in the unit ball in $\R^n$, positive in a neighborhood of $0$, and with $\int_{\R^n} \varphi=1$. For $\epsilon>0$, let $\varphi_\epsilon \in C^\infty_c (\R^n)$ be the function $\varphi_\epsilon(x)=\frac{1}{\epsilon^n} \varphi(\frac{x-\bar{x}}{\epsilon})$: this function is non-negative, has support in $B(\epsilon,\bar{x})$ (the ball centered at $\bar{x}$ and with radius $\epsilon$), and $\int_{\R^n}\varphi_\epsilon=1$.

Let $t <0$. We want to show that $(1-t)k + t b_{p,n} \notin \supp(\mathcal{K}_{SV_1}^{n*})$. We argue by contradiction: assume that $(1-t)k + t b_{p,n} \in \supp(\mathcal{K}_{SV_1}^{n*})$. Then $(1-t)\tilde{k} + t \widetilde{b_{p,n}}$ is a convex function on $\R^n$, hence its Laplacian is non-negative in the weak sense, so in particular we have
\begin{equation} \label{eqdemopointterminal}
\int_{\R^n}((1-t)\tilde{k} + t \widetilde{b_{p,n}}) \Delta_e \varphi_\epsilon 	 \geq 0~.
\end{equation}
 We will first show that we always have
\begin{equation} \label{eqdemopointterminalbis}
\int_{\R^n}\tilde{k} \Delta_e \varphi_\epsilon \underset{\epsilon \rightarrow 0} \longrightarrow + \infty ~.
\end{equation}
Since $t$ is negative, with equation \eqref{eqdemopointterminal} it is sufficient to show that
\begin{equation} \label{eqdemopointterminalter}
\int_{\R^n}\widetilde{b_{p,n}} \Delta_e \varphi_\epsilon \underset{\epsilon \rightarrow 0} \longrightarrow + \infty ~.
\end{equation}
Now we need to argue depending whether $p=1$ or $p \geq 2$.
\begin{itemize}[nolistsep]
\item If $p \geq 2$ we have $\Delta_e \widetilde{b_{p,n}} (x) = \frac{r_1(p)(p-1)}{\sqrt{x_1^2 + \cdots+x_p^2}}$, and since $\bar{x}_1=\cdots=\bar{x}_p=0$ we have $\sqrt{x_1^2+\cdots+x_p^2} \leq \|x-\bar{x}\|$, hence $\Delta_e \widetilde{b_{p,n}} (x) \geq \frac{r_1(p)(p-1)}{\epsilon}$ for every $x \in B(\epsilon,\bar{x})$, so we have (by  Green Formula)
\[
\int_{\R^n} \widetilde{b_{p,n}} \Delta_e \varphi_\epsilon =  \int_{B(\epsilon,\bar{x})} \varphi_\epsilon \Delta_e \widetilde{b_{p,n}} \geq \frac{r_1(p)(p-1)}{\epsilon} \int_{B(\epsilon,\bar{x})}  \varphi_\epsilon=\frac{r_1(p)(p-1)}{\epsilon}~,
\]
and this gives \eqref{eqdemopointterminalter}.

\item If $p=1$, then we have
\begin{eqnarray*}
\int_{\R^n} \widetilde{b_{p,n}}(x) \Delta_e \varphi_\epsilon(x) \mbox{d}x &=& \frac{1}{2} \int_{\R^n} |x_1| \Delta_e \varphi_\epsilon(x) \mbox{d}x\\
 &=&  \int_{\R^{n-1}} \varphi_\epsilon (0,x_2,\ldots,x_n) \mbox{d}x_2 \ldots \mbox{d}x_n \\
&=& \frac{1}{\epsilon^n} \int_{\R^{n-1}} \varphi \left(0,\frac{x_2-\bar{x}_2}{\epsilon},\ldots,\frac{x_n-\bar{x}_n}{\epsilon}\right) \mbox{d}x_2 \ldots \mbox{d}x_n \\
&=& \frac{1}{\epsilon} \int_{\R^{n-1}} \varphi (0,y_2,\ldots ,y_n) \mbox{d}y_2 \ldots \mbox{d}y_n ~.
\end{eqnarray*}
The second equality is a classical computation, the third is true because $\bar{x}_1=0$, and for the last one we use the change of variable $y_i = \frac{x_i - \bar{x}_i}{\epsilon}$. Since $\varphi$ is positive in a neighborhood of zero, we have $\int_{\R^{n-1}} \varphi (0,y_2,\ldots,y_n) \mbox{d}y_2 \ldots \mbox{d}y_n >0$, and this gives (\ref{eqdemopointterminalter}).
\end{itemize}

Moreover, since $\widetilde{k}=Q_{\bar{x}} + R_{\bar{x}}$, we have
\[
\int_{\R^n}\widetilde{k} \Delta_e \varphi_\epsilon = \int_{\R^n}Q_{\bar{x}} \Delta_e \varphi_\epsilon + \int_{\R^n} R_{\bar{x}}\Delta_e \varphi_\epsilon~.
\]
The function $Q_{\bar{x}}$ is a quadratic polynomial, hence its Laplacian is equal to a constant $C \in \R$, which gives $\int_{\R^n}Q_{\bar{x}} \Delta_e \varphi_\epsilon = \int_{\R^n} C \varphi_\epsilon =C$. And since $\Delta_e \varphi_\epsilon(x) =\frac{1}{\epsilon^{n+2}} \Delta_e \varphi(\frac{x-\bar{x}}{\epsilon})$, with the change of variable $y=\frac{x-\bar{x}}{\epsilon}$, we have
\begin{eqnarray*}
\int_{\R^n} R_{\bar{x}}(x) \Delta_e \varphi_\epsilon(x) \mbox{d}x &=& \frac{1}{\epsilon^{n+2}} \int_{B(\epsilon,\bar{x})} R_{\bar{x}}(x) \Delta_e \varphi \left(\frac{x-\bar{x}}{\epsilon} \right)\mbox{d}x \\
& =& \frac{1}{\epsilon^2} \int_{B(1,0)} R_{\bar{x}}(\bar{x}+\epsilon y) \Delta_e \varphi (y) \mbox{d}y ~.
\end{eqnarray*}
Since $\frac{R_{\bar{x}}(\bar{x}+u)}{\|u\|^2} \underset{u \rightarrow 0} \longrightarrow 0$, there exists $M>0$ such that $| R_{\bar{x}}(\bar{x}+u)| \leq M \|u\|^2$ for $\|u\|$ small enough, hence for $\epsilon$ small enough we have, for every $y \in B(1,0)$, $|R_{\bar{x}}(\bar{x}+ \epsilon y)| \leq M \epsilon^2 \|y\|^2$, hence we obtain
\[
|\int_{\R^n} R_{\bar{x}}(x) \Delta_e \varphi_\epsilon(x) \mbox{d}x | \leq M \int_{B(1,0)} \|y\|^2 |\Delta_e \varphi(y)| \mbox{d}y~.
\]
The integral $\int_{\R^n} R_{\bar{x}} \Delta_e \varphi_\epsilon$ does not go to $+\infty$ when $\epsilon$ goes to zero, and by (\ref{eqdemopointterminalbis}) this is a contradiction.
\end{proof}

\subsection{Comparison of topologies}\label{haus}

We want to compare the topologies given by $d_{\mathbb{K}}$ and $d_\infty$ on  $\supp(\mathcal{K}_{SV_1}^{n*})$, where $d_\infty$ is the distance given by the sup norm. As a tool, we will use the distances $d_{L^2}$ and $d_{H^1}$ induced by the $L^2$ and $H^1$ norms respectively on $\sob_1$, as well as the following theorem, see  \cite{vitale} and \cite[Proposition~2.3.1]{groemer}.

\begin{theorem}[{Vitale}]\label{vitale}
The  distances $d_{\infty}$ and $d_{L^2}$ induce the same topology on $\supp(\mathcal{K}^n) \subset C^0(\mathbb{S}^{n-1})$. 
\end{theorem}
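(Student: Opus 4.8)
The plan is to prove the two inclusions of topologies separately. One direction is immediate and in fact holds on all of $C^0(\S^{n-1})$: since $\|h\|_{L^2}\le \sqrt{\operatorname{vol}(\S^{n-1})}\,\|h\|_\infty$, every $d_\infty$-convergent sequence is $d_{L^2}$-convergent, so the $d_\infty$-topology is finer than the $d_{L^2}$-topology. The entire content of the theorem is the reverse implication: if $h_i=\supp(K_i)$ and $h=\supp(K)$ satisfy $\|h_i-h\|_{L^2}\to 0$, then $\|h_i-h\|_\infty\to 0$. The point to keep in mind is that one \emph{cannot} simply invoke an equivalence of the two norms on a fixed vector space, because a difference $h_i-h$ of support functions is in general not itself a support function; the argument must exploit the geometric structure of $\supp(\mathcal{K}^n)$ together with a compactness argument.

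First I would establish a uniform comparison on the cone of support functions: there is a constant $C=C(n)$ such that $\|h\|_\infty\le C\,\|h\|_{L^2}$ for every $h\in\supp(\mathcal{K}^n)$. The key elementary observation is that, for a support function, $h(x)+h(-x)\ge 0$ for all $x$ (indeed $h(x)\ge\langle x,p\rangle$ and $h(-x)\ge-\langle x,p\rangle$ for any fixed $p\in K$). This forces $\max_{\S^{n-1}}h=\|h\|_\infty$, so the sup norm is attained at a point $x_0$ with $h(x_0)=\|h\|_\infty=:R\ge 0$. By Lemma~\ref{lem lips} the function $h$ is $R$-Lipschitz, hence $h\ge R/2$ on the geodesic ball of radius $1/2$ about $x_0$, whose measure $\mu_0>0$ depends only on $n$. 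Integrating gives $\|h\|_{L^2}^2\ge (R/2)^2\mu_0$, which is the claimed bound with $C=2/\sqrt{\mu_0}$.

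With this in hand I would run the compactness argument. If $\|h_i-h\|_{L^2}\to 0$ then $\sup_i\|h_i\|_{L^2}<\infty$, so by the comparison above $\sup_i\|h_i\|_\infty<\infty$; thus $\{h_i\}$ lies in a closed $d_\infty$-ball of $\supp(\mathcal{K}^n)$, which is compact by the Blaschke selection theorem (Theorem~\ref{thm:proper}). Consequently every subsequence of $(h_i)$ admits a further subsequence converging in $d_\infty$ to some $g\in\supp(\mathcal{K}^n)$. Uniform convergence implies $L^2$ convergence, so $g$ is an $L^2$-limit of $(h_i)$; by uniqueness of $L^2$-limits, $g=h$ almost everywhere, and since both are continuous, $g=h$. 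Hence every subsequence has a sub-subsequence converging to $h$ in $d_\infty$, which forces $h_i\to h$ in $d_\infty$. Together with the first direction, this shows that $d_\infty$ and $d_{L^2}$ induce the same topology on $\supp(\mathcal{K}^n)$.

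The main obstacle is exactly the point flagged above: the reverse implication cannot be reduced to a norm equivalence, because the relevant differences leave the cone of support functions. The device that overcomes it is the uniform bound $\|h\|_\infty\le C\,\|h\|_{L^2}$ on the cone itself --- resting on the sign condition $h(x)+h(-x)\ge 0$ and the Lipschitz estimate of Lemma~\ref{lem lips} --- which converts an $L^2$ bound into a sup bound and thereby unlocks the Blaschke (equivalently Arzel\`a--Ascoli) compactness needed to upgrade $L^2$-convergence to uniform convergence.
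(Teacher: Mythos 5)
Your proof is correct. Note that the paper does not prove this statement at all: it is quoted as Vitale's theorem with references to \cite{vitale} and Groemer, where the argument is quantitative --- one bounds the Hausdorff distance of two convex bodies contained in a ball of radius $D$ by an explicit power of their $L^2$ distance times a power of $D$, by estimating the difference $\supp(K)-\supp(L)$ from below on a spherical cap around the point where its maximum is attained. Your route is genuinely different and softer: instead of controlling differences of support functions (which, as you rightly flag, leave the cone), you prove the one-sided comparison $\|h\|_\infty\le C(n)\|h\|_{L^2}$ on the cone itself, using that $h(x)+h(-x)\ge 0$ forces $\|h\|_\infty=\max h$ and then the Lipschitz bound of Lemma~\ref{lem lips} to get $h\ge \|h\|_\infty/2$ on a cap of definite measure. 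This converts $L^2$-boundedness into $d_\infty$-boundedness, after which the Blaschke selection theorem and the standard subsequence trick upgrade $L^2$-convergence to uniform convergence. What the classical argument buys is an explicit modulus of continuity (a H\"older-type inequality between the two metrics on bounded families), which your compactness argument does not produce; what your argument buys is brevity and the fact that it only uses ingredients already available in the paper (Lemma~\ref{lem lips} and Theorem~\ref{thm:proper}). Since the theorem only asserts equality of topologies, your proof fully suffices.
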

The result is weaker than saying that the two norms are equivalent on the space of convex bodies, that is not true, see \cite{vitale} for details.

\begin{corollary} \label{cor equi L2 H1}
The  distances $d_\infty$, $d_{L^2}$ and $d_{H^1}$ induce the same topology on $\supp(\mathcal{K}^n)$. 
\end{corollary}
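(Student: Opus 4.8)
The plan is to use the fact that $d_\infty$, $d_{L^2}$ and $d_{H^1}$ all come from norms, so each induces a metric topology on the common set $\supp(\mathcal{K}^n)$; since these are metric topologies, it suffices to show that the three notions of sequential convergence coincide (with the same limits). I would organize the argument as a cycle of implications
\[
d_\infty\text{-conv.}\ \Longrightarrow\ d_{H^1}\text{-conv.}\ \Longrightarrow\ d_{L^2}\text{-conv.}\ \Longrightarrow\ d_\infty\text{-conv.},
\]
and once the loop is closed, convergence in any one of the three distances will force convergence in the other two toward the same limit, which is exactly the statement.

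The first arrow is already available: it is precisely Corollary~\ref{cor conv H1}, which says that on $\supp(\mathcal{K}^n)$, $d_\infty(h_i,h)\to 0$ implies $d_{H^1}(h_i,h)\to 0$ (this is where convexity enters, through the almost-everywhere convergence of gradients in Lemma~\ref{lem conv gradient} and the uniform Lipschitz bound of Lemma~\ref{lem lips}). The second arrow is the only fully elementary step: for every $g\in H^1(\mathbb{S}^{n-1})$ one has $\|g\|_{L^2}^2\le \|g\|_{L^2}^2+\|\nabla g\|_{L^2}^2=\|g\|_{H^1}^2$, hence $d_{L^2}\le d_{H^1}$ on all of $H^1(\mathbb{S}^{n-1})$, so $H^1$-convergence trivially implies $L^2$-convergence. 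The third arrow is Vitale's theorem (Theorem~\ref{vitale}), which asserts that $d_{L^2}$ and $d_\infty$ induce the same topology on $\supp(\mathcal{K}^n)$, and in particular that $d_{L^2}$-convergence implies $d_\infty$-convergence.

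The main, indeed the only, genuine obstacle is Vitale's theorem, which is the nontrivial analytic input (it is \emph{not} a norm-equivalence statement, only a topological one); here it is invoked as a black box, and everything else is a formal chaining of one-line estimates. I would also remark that the ``easy'' comparison $d_\infty\Rightarrow d_{L^2}$, coming from $\|g\|_{L^2}\le \sqrt{\operatorname{vol}(\mathbb{S}^{n-1})}\,\|g\|_\infty$ on the finite-measure space $\mathbb{S}^{n-1}$, is not even needed for the argument, since the cycle above already routes through $H^1$; the substantive content is carried entirely by Corollary~\ref{cor conv H1} and Theorem~\ref{vitale}.
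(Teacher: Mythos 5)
Your proposal is correct and follows essentially the same route as the paper: the authors also close the cycle $H^1 \Rightarrow L^2$ (trivial norm comparison), $L^2 \Rightarrow d_\infty$ (Vitale's theorem), $d_\infty \Rightarrow H^1$ (Corollary~\ref{cor conv H1}), which is exactly your chain read from a different starting point. No gaps.
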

\begin{proof}
We prove that $d_{L^2}$ and $d_{H^1}$ induce the same topology.
If $h_i \rightarrow h$ for $\|\cdot\|_{H^1}$, then obviously $h_i \rightarrow h$ for $\|\cdot\|_{L^2}$. And if $h_i \rightarrow h$ for $\|\cdot\|_{L^2}$, then by Theorem \ref{vitale} we have $h_i \rightarrow h$ for $d_\infty$. Let us check that this implies the convergence for $d_{H^1}$. This is obvious that $h_i \rightarrow h$ in $L^2$. Moreover, let $R>0$ be such that $h_i \leq R$ for every $i$. Then $(\nabla h_i)_i$ almost everywhere converges pointwise to $\nabla h$, hence the convergence holds in $L^2$ via Lebesgue Dominated Convergence Theorem: these functions are uniformly bounded by $R$ as the $h_i$ are $R$-Lipschitz. Hence $h_i \rightarrow h$ for $\|\cdot\|_{H^1}$.
\end{proof}

A direct consequence of Fact \ref{prop equiv topo Klein} and Corollary \ref{cor equi L2 H1} is the following corollary, which relates the distances $d_\infty$ and $d_\mathbb{K}$.
\begin{proposition}\label{same cor}
On  $\supp(\mathcal{K}_{SV_1}^{n*})$, $d_\infty$ and $d_{\mathbb{K}}$ (as well as $d_{L^2}$ and $d_{H^1}$) induce the same topology.
\end{proposition}

As  $d_\infty$ clearly induces the same topology on $\supp(\mathcal{K}_{SV_1}^{n*})$ and $\supp(\mathcal{K}_{SV_2}^{n*})$, we obtain the last point of Theorem~\ref{main}, as the Hausdorff distance for convex bodies is exactly $d_\infty$ for the support functions.

\begin{remark}{\rm
	Even if $d_\infty$ and $d_\mathbb{K}$ induce the same topology, their behavior is quite different. First, similarly to the comparison between Euclidean and hyperbolic metric on the disc, $(\supp(\K^{n*}_{SV_1}),d_\infty)$ is bounded and $(\supp(\K^{n*}_{SV_1}),d_\mathbb{K})$ is not.  Also,  if segments are also shortest paths for 
 the Hausdorff distance, they are not unique in general, see note 11 of Section~1.8 in \cite{schneider}.
	}\end{remark}

Let us now check that  $(\supp(\mathcal{K}_{SV_1}^{n*}), d_{\mathbb{K}})$ is a proper metric space. It will be an immediate consequence of  Blaschke Selection Theorem together with Proposition~\ref{same cor}.
%
%\begin{fact} \label{fact convexes dans B(1)}
%If $K \in \mathcal{K}^n_{SV_1}$, then $K$ is included in the closed unit ball in $\R^n$.
%\end{fact}
%\begin{proof}
%If this is not true, then there exists $x \in K$ with $\|x\|>1$. The Steiner point of $K$ is $0$ and belongs to $K$, hence $[0,x] \subset K$, which gives
%$V_1(K)\geq V_1([0,x])=\|x\|>1$: this is a contradiction.
%\end{proof}
%\begin{lemma} \label{cor compact}
%$(\supp(\mathcal{K}_{SV_1}^{n}),d_\infty)$ is compact. 
%\end{lemma}
%\begin{proof}
%Recall that by the  Blaschke selection theorem, $(\supp(\mathcal{K}^{n}),d_\infty)$ is proper.
%This is sufficient to show that $\supp(\mathcal{K}^n_{SV_1})$ is closed and bounded in $(\supp(\mathcal{K}^n),d_\infty)$.
%The boundedness is a direct consequence of Fact \ref{fact convexes dans B(1)}: for every $h \in \supp(\mathcal{K}^n_{SV_1})$ we have $\|h\|_\infty \leq 1$.
%And closeness is clear, since the maps $h \mapsto \int_{\S^{n-1}} h(x)x \mbox{d}\S^{n-1}(x)$ and $h \mapsto \int_{\S^{n-1}} h(x) \mbox{d}\S^{n-1}(x)$ are continuous on $(\supp(\mathcal{K}^n),d_\infty)$.
%\end{proof}

\begin{proposition}\label{prop proper}
$(\supp(\mathcal{K}_{SV_1}^{n*}), d_{\mathbb{K}})$ is a proper metric space.
\end{proposition}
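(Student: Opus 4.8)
The plan is to realise $\supp(\mathcal{K}_{SV_1}^{n*})$ as a space that is carved out of the $d_\infty$-compact set $\supp(\mathcal{K}_{SV_1}^{n})$ (Corollary~\ref{cor compact}) by cutting away a neighbourhood of the segments, where $\A$ vanishes; the point is that this cut locus lies at infinite $d_{\mathbb{K}}$-distance by Proposition~\ref{lem bord infini}, so bounded $d_{\mathbb{K}}$-sets never come close to it. Concretely, let $F\subset\supp(\mathcal{K}_{SV_1}^{n*})$ be $d_{\mathbb{K}}$-closed and $d_{\mathbb{K}}$-bounded; we must show $F$ is $d_{\mathbb{K}}$-compact. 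We may assume $F\neq\emptyset$, fix $k\in F$, and pick $R>0$ so that $F$ is contained in the $d_{\mathbb{K}}$-ball of radius $R$ about $k$.

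First I would show that $\A$ is bounded below by a positive constant on $F$. If this failed, there would be a sequence $h_i\in F$ with $\A(h_i)\to 0$; then Proposition~\ref{lem bord infini} would give $d_{\mathbb{K}}(h_i,k)\to+\infty$, contradicting $d_{\mathbb{K}}(h_i,k)\leq R$. Hence there is $\epsilon_0>0$ with $\A(h)\geq\epsilon_0$ for every $h\in F$. Next I would set $C_{\epsilon_0}=\{h\in\supp(\mathcal{K}_{SV_1}^{n})\mid \A(h)\geq\epsilon_0\}$. Because $\A$ is $d_\infty$-continuous (Corollary~\ref{prop cont aire}), $C_{\epsilon_0}$ is a $d_\infty$-closed subset of the $d_\infty$-compact space $\supp(\mathcal{K}_{SV_1}^{n})$, hence $d_\infty$-compact; and since $\A>0$ on $C_{\epsilon_0}$ we have $C_{\epsilon_0}\subset\supp(\mathcal{K}_{SV_1}^{n*})$. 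On that set $d_\infty$ and $d_{\mathbb{K}}$ define the same topology (Corollary~\ref{same cor}), so $C_{\epsilon_0}$ is also $d_{\mathbb{K}}$-compact. Finally $F\subset C_{\epsilon_0}$ and $F$ is $d_{\mathbb{K}}$-closed, so $F$ is a closed subset of a $d_{\mathbb{K}}$-compact set, hence $d_{\mathbb{K}}$-compact; this is exactly properness.

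The hard part is the bounded-below step, and it is the only place where anything beyond soft topology is used: it encodes the hyperbolic feature that bounded $d_{\mathbb{K}}$-sets stay uniformly away from the light cone $\{\A=0\}$, which is precisely the content of Proposition~\ref{lem bord infini}. Once $\epsilon_0$ is produced, the remainder is the standard ``a closed subset of a compact set is compact'' argument, with the identification of the $d_\infty$- and $d_{\mathbb{K}}$-topologies (Corollaries~\ref{same cor} and~\ref{cor compact}) doing the bookkeeping that transfers compactness between the two metrics.
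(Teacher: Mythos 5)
Your proof is correct and uses exactly the same ingredients as the paper's: Blaschke compactness of $(\supp(\mathcal{K}_{SV_1}^{n}),d_\infty)$ (Corollary~\ref{cor compact}), the equivalence of the $d_\infty$- and $d_{\mathbb{K}}$-topologies on $\supp(\mathcal{K}_{SV_1}^{n*})$ (Corollary~\ref{same cor}), continuity of $\A$ (Corollary~\ref{prop cont aire}), and Proposition~\ref{lem bord infini} to keep $d_{\mathbb{K}}$-bounded sets uniformly away from $\{\A=0\}$. The only difference is packaging: the paper shows directly that a closed bounded set is $d_\infty$-closed in the ambient compact by examining a limit point and ruling out $\A(h)=0$, whereas you first extract the uniform lower bound $\epsilon_0$ and sandwich $F$ in the compact superlevel set $C_{\epsilon_0}$ --- the same argument in a slightly different order.
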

\begin{proof}

Let $A$ be a closed bounded subset of $(\supp(\K^{n*}_{SV_1}),d_\mathbb{K})$. We want to show that $A$ is compact for $d_\mathbb{K}$; by Proposition~\ref{same cor}, it suffices to show that it is compact for $d_\infty$. As $(\supp(\mathcal{K}_{SV_1}^{n}),d_\infty)$ is compact (see p. 165 in \cite{schneider}), it suffices to show that $A$ is closed in $(\supp(\mathcal{K}_{SV_1}^{n}),d_\infty)$.

So assume $(h_i)_i$ is a sequence of elements of $A$ converging to  $h\in\supp(\K^{n}_{SV_1})$ for $d_\infty$; we want to show that $h \in A$. If $h \in \supp(\K^{n*}_{SV_1})$, then this is true, because Proposition~ \ref{same cor} implies that $A$ is a closed subset of $(\supp(\K^{n*}_{SV_1}),d_\infty)$. Otherwise, $h\in\supp(\K^{n}_{SV_1})\setminus  \supp(\K^{n*}_{SV_1})$, hence $\A(h)=0$ and it follows from Corollary~\ref{cor equi L2 H1} that $\A(h_i)\to 0$.
Then by Fact~\ref{lem bord infini}, the distance in $(\Kl_n,d_\mathbb{K})$ between $h_i$ and any given point $k \in \Kl_n$ goes to infinity, and that contradicts the fact that $A$ is a bounded subset of $(\supp(\K^{n*}_{SV_1}),d_\mathbb{K})$.
\end{proof}

Theorem~\ref{main} is now proved.

The two following facts conclude the proof of  Theorem~\ref{main1b}:
\begin{itemize}[nolistsep]
\item Since $(\oshape^{n*},\d)$ is proper, it is complete, hence $(\supp(\mathcal{K}^{n*}_{SH}),d_\mathbb{H})$ is also complete, so $\supp(\mathcal{K}^{n*}_{SH}) \subset \mathbb{H}_n^\infty$ is a closed subspace. 
\item Now, let us prove that $\supp(\mathcal{K}^{n*}_{SH})$ has empty interior. If this is not true, then there exists a ball $B$ in $(\mathbb{H}^\infty_n,d_\mathbb{H})$ such that $B \subset \supp(\mathcal{K}^{n*}_{SH})$; we can even assume that $\bar B$ (the closure of $B$) satisfies $\bar B \subset \supp(\mathcal{K}^{n*}_{SH})$. Since $(\supp(\mathcal{K}^{n*}_{SH}),d_\mathbb{H})$ is proper, closed balls are compact, hence $\bar B$ is compact. Hence there exists a non-empty relatively compact open set in $(\Kl_n,d_{\mathbb{K}})$. But that would be true for the infinite-dimensional Banach space $(\sob_{01},d_{01})$, and that is impossible: a closed ball would be compact.
\end{itemize}

\begin{figure}
\begin{center}
\psfrag{P}{$(\Vb)^{-1}(0)$}
\psfrag{a}{$(\A)^{-1}(0)$}
\psfrag{K}{$\supp(\mathcal{K}_S^{n*})$}
\psfrag{dK}{$\partial \K$}
%\psfrag{-}{$\scriptsize{-}$}
\psfrag{+}{$\scriptsize{+}$}
\includegraphics[width=0.7\linewidth]{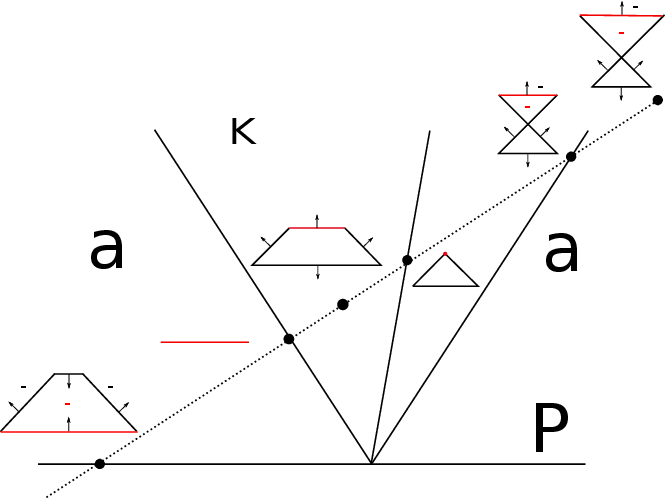}\caption{The minus sign outside of the polygons indicates edges with negative algebraic length, while the minus sign inside a polygon indicates a negative $\A$. }\label{fig:quadrangl}
\end{center}
\end{figure}

\begin{remark}\label{rem ref}{\rm
As far as we know, the idea associate a hyperbolic metric to spaces of convex
bodies \emph{via} the area form and support function was more or less explicit in the $90$'s, for spaces of convex polygones. The main reference is \cite{BG}, see \cite{EM} for detailed references. This construction was extended to spaces of convex polytopes in \cite{FI}. 

The smallest vector space containing $\supp(\K^{n})$  as a convex cone
is the vector space spanned by the cone:
$$\mathrm{Sonic}^n=\{ h-k | h,k\in  \supp(\K^{n})\}~, $$
the space of \emph{$n$-dimensional hedgehogs}. 
See \cite[9.6]{schneider}, \cite{schnei-h} and the references therein for more information. 
Let us say that 
the name was coined in \cite{LLR1}, although
they previously appeared in the literature  under different names, see \cite{oliker}.
If $h\in \mathrm{Sonic}^n$, there is a way to associate a geometric object in $\R^n$, see  \cite{schnei-h,MM}, that is illustrated in most of the figures of the present article. 
A description of $\mathrm{Sonic}^2$ in $C^0(\mathbb{S}^1)$ is contained in \cite{MM}.  
But $\mathrm{Sonic}^n$ is not complete for any reasonable norm on it
---it contains  
 $C^2(\mathbb{S}^{n-1})$, so it is dense in both $\sob$ and $C^0(\S^{n-1})$ endowed with their classical norms. 
Particular cases of the results of the present article were achieved in this setting (mostly in the regular case) in \cite{mmarx0,mmarx,mmarx2}.
}\end{remark}

\section{The space of shapes $\shape^{n*}$}\label{sec shapes}

\subsection{Immediate properties}

Let $\shape^{n*}$ be the quotient of $\oshape^{n*}$ by linear isometries of the Euclidean space $\R^n$: the action of $O(n)$ on 
$\oshape^{n*}$ is defined by $\Phi[K]:=[\Phi K]$. For $K\in\mathcal{K}^{n*}$,  we will denote by $\llbracket K\rrbracket$
the set of convex bodies differing from $K$ by positive scaling and Euclidean isometries. 

Since $\sa$ is $O(n)-$invariant, we have $\d(\Phi[K_1],\Phi[K_2])=\d([K_1],[K_2])$, so $O(n)$ acts by isometries on $\oshape^{n*}$.
Moreover, the action of $O(n)$ is clearly continuous on support functions for $d_\infty$, hence by Proposition~\ref{same cor}, 
the action is continuous on  $(\shape^{n*},\d)$. 
Let us introduce
\begin{equation}\label{def bd}\bd(\llbracket K_1\rrbracket,\llbracket K_2\rrbracket)=\inf_{\Phi,\Phi'\in O(n)} \d (\Phi[K_1],\Phi'[K_2])~.\end{equation}
Noting that by continuity and compactness, the infimum is actually a minimum,
% we have
%$\bd(\llbracket K_1\rrbracket,\llbracket K_2\rrbracket)=\min_{\Phi \in O(n)} \d([K_1], \Phi[K_2])$, 
 it is not hard to deduce that 
$\bd$ is a distance.

\begin{proposition} \label{prop:shapeCBBproper}
$(\shape^{n*},\bd)$ is a proper geodesic metric space with curvature $\geq -1$.
\end{proposition}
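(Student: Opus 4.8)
The plan is to apply Proposition~\ref{propquotientmetric} directly, with $X=\oshape^{n*}$ and $d=\d$, the group $O=O(n)$ acting by $\Phi[K]:=[\Phi K]$, and the quotient $\bar X=\shape^{n*}$ equipped with $\bar d = \bd$. The substance of the argument has already been assembled in the preceding material: Theorem~\ref{main} establishes that $(\oshape^{n*},\d)$ is proper, geodesic, and CBB($-1$), and these are precisely the three standing hypotheses on $(X,d)$ required by the three items of Proposition~\ref{propquotientmetric}. So what remains is only to confirm that the hypotheses on the group action are in force and then to read off the three conclusions.

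First I would verify the conditions on the action. The group $O(n)$ is compact. It acts by isometries on $(\oshape^{n*},\d)$: since $\sa$ is invariant under linear isometries (property~\ref{isom}), the mixed area $\sa(\cdot,\cdot)$ defined through \eqref{def mixed v2} is $O(n)$-invariant as well, whence $\d(\Phi[K_1],\Phi'[K_2])$ depends only on the orbits, as was observed just before the statement. Finally, the orbit maps $\Phi\in O(n)\mapsto \Phi[K]\in\oshape^{n*}$ are continuous for every $[K]$, which is exactly the content of the Fact immediately preceding this proposition (proved via $\supp(\Phi K)(x)=\supp(K)(\Phi^{-1}x)$ together with Lemma~\ref{lem:point impl unif} and Corollary~\ref{same cor}). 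With these checks, all assumptions of Proposition~\ref{propquotientmetric} hold for $O=O(n)$ acting on $(\oshape^{n*},\d)$.

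Having done so, the conclusion is immediate: item~1 of Proposition~\ref{propquotientmetric} gives that $\bd$ is proper, item~2 that $\bd$ is a geodesic metric, and item~3 that $(\shape^{n*},\bd)$ is CBB($-1$). The only point deserving a line of care — and the closest thing to an obstacle, though a trivial one — is to make sure that the metric $\bd$ defined in \eqref{def bd} by a double infimum over $\Phi,\Phi'\in O(n)$ coincides with the abstract quotient metric $\bar d(\bar x,\bar y)=\inf_{A,B\in O}d(A(x),B(y))$ of Section~4.2; this is immediate upon substituting $x=[K_1]$, $y=[K_2]$, and using that compactness of $O(n)$ turns the infimum into an attained minimum, exactly as recorded there. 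Beyond that matching, there is no genuine difficulty at this stage, since the real work lies entirely in Proposition~\ref{propquotientmetric} and in Theorem~\ref{main}, both of which are already available.
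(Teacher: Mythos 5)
Your proposal is correct and follows essentially the same route as the paper: the text likewise records that $O(n)$ is compact, acts by isometries since $\sa$ is $O(n)$-invariant, and has continuous orbit maps (the Fact just before the proposition), and then reads off all three conclusions from Proposition~\ref{propquotientmetric}. Your extra remark matching the double-infimum definition \eqref{def bd} with the abstract quotient metric of Section~4.2 is a sensible, if minor, point of care that the paper leaves implicit.
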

\begin{proof}
It is a general fact that the quotient will be geodesic and with curvature $\geq -1$, see for example Proposition~10.2.4 in \cite{BBI}. The fact that the quotient is proper is also very general. Indeed, suppose that  $(\llbracket K_i\rrbracket)_{i\in \N}$ is a bounded sequence in $(\shape^{n*},\bd)$.  There are  $\Phi_i \in O(n)$ such that $(\Phi_i[K_i])_{i\in \N}$
is a bounded sequence in $(\oshape^{n*},\d)$. Since $(\oshape^{n*},\d)$ is proper, up to extract a subsequence, there exists $[K] \in \oshape^{n*}$ such that $\d(\Phi_i[K_i],[K])\to 0$.
As $\bd(\llbracket K_i\rrbracket, \llbracket K\rrbracket) \leq \d(\Phi_i[K_i],[K])$, we have $\bd(\llbracket K_i\rrbracket, \llbracket K\rrbracket) \to 0$.
\end{proof}

\subsection{Non-uniqueness of shortest paths in $\shape^{n*}$}\label{sectionnonuniqueness}

The aim of this section is to prove that shortest paths are not unique in $\shape^{n*}$. Obviously, since $\shape^{2*}$ isometrically embedds into $\shape^{n*}$ for $n \geq 2$, it is sufficient to prove this property for $n=2$. Hence, in this section, we consider convex bodies in $\R^2$. We will produce a handmade example.

Let $K$ be the intersection of the half-space $[0,\infty) \times \R$ with the ellipse with center 0, width $2\sqrt{2}$ and height $\frac{2}{\sqrt{2}}$. The support function of $K$ is a function on $\mathbb{S}^1$, and with the parametrization $x=(\cos s,\sin s) \in \mathbb{S}^1$, for $s\in[0,2\pi]$, we will actually define the 
support function $k$ of $K$ on $[0,2\pi]$. Namely,
$$
k(s) = \sqrt{2 \cos^2 s + \frac{1}{2} \sin^2 s} \mbox{ for }  s \in [-\frac{\pi}{2}, \frac{\pi}{2}], \mbox{ and } k(s) = \frac{1}{\sqrt{2}} |\sin s| \mbox{ for } s \in [\frac{\pi}{2}, \frac{3\pi}{2}]~.
$$
Let $(\beta,0)$ be the Steiner point of $K$, and let $\alpha = V_1(K) = \frac{1}{2} \int_0^{2\pi} k \simeq 2.4$. Then the convex body $K_1 = \alpha^{-1}K + (-\alpha^{-1}\beta,0)$ has Steiner point 0, and $V_1(K_1)=1$: hence $K_1 \in\mathcal{K}_{SV_1}^{2*}$. Its support function $k_1 \in \supp(\mathcal{K}_{SV_1}^{2*})$ is given by
$$ k_1(s) = \alpha^{-1} \left(\sqrt{2 \cos^2 s + \frac{1}{2} \sin^2 s}-\beta \cos s \right) \mbox{ for } s \in [\frac{-\pi}{2},\frac{\pi}{2}]$$
and
$$ k_1(s) = \alpha^{-1}\left( \frac{1}{\sqrt{2}}|\sin s| - \beta \cos s \right) \mbox{ for } s \in [\frac{\pi}{2},\frac{3\pi}{2}]~.$$

Let $K_2$ be the rectangle $[-\frac{2}{5},\frac{2}{5}]\times [-\frac{1}{10},\frac{1}{10}]$. Obviously, 0 is the Steiner point of $K_2$. Its support function is defined for any $s \in [0,2\pi]$ by
$$ k_2(s) = \frac{2}{5} |\cos s| + \frac{1}{10} |\sin s|~,$$
and since $K_2 = [-\frac{2}{5},\frac{2}{5}] \times \{0\} +  \{0\} \times [-\frac{1}{10},\frac{1}{10}]$, we have $V_1(K_2) = \operatorname{length}([-\frac{2}{5},\frac{2}{5}]) + \operatorname{length}([-\frac{1}{10},\frac{1}{10}])=1$. Hence $K_2 \in \mathcal{K}_{SV_1}^{2*}$ and $k_2 \in \supp (\mathcal{K}_{SV_1}^{2*})$.

Let $\llbracket K_1 \rrbracket$ and $\llbracket K_2 \rrbracket$  be the corresponding equivalent classes in $\shape^{2*}$. Since $K_2$ is invariant by the symmetry with respect to the horizontal line, the distance between $\llbracket K_1 \rrbracket$ and $\llbracket K_2 \rrbracket$ is given by
$$
\bdd (\llbracket K_1 \rrbracket, \llbracket K_2 \rrbracket) = \min_{\theta \in \R} \dd ([K_1], R_\theta [K_2])~,
$$
where we denote by $R_\theta$ the rotation of angle $\theta$ in $\R^2$. We will prove the following:
\begin{proposition} \label{prop non unicite}
The minimum is obtained for $\theta=0$ and $\theta=\frac{\pi}{2}$, that is we have
$$\bdd (\llbracket K_1 \rrbracket, \llbracket K_2 \rrbracket) = \dd ([K_1], [K_2]) =\dd ([K_1], R_{\frac{\pi}{2}}[K_2]) ~.$$
\end{proposition}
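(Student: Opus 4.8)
The plan is to reduce the computation of $\bdd(\llbracket K_1\rrbracket,\llbracket K_2\rrbracket)=\min_\theta\dd([K_1],R_\theta[K_2])$ to a one-variable minimization of a mixed area, and then to exhibit a change of variable turning the resulting function into a concave one. First, by the definition of $\d$ we have $\dd([K_1],R_\theta[K_2])=\operatorname{argch}\!\big(V_2(K_1,R_\theta K_2)/\sqrt{V_2(K_1)V_2(R_\theta K_2)}\big)$. Since rotations lie in $\mathrm{SL}(2,\R)$, property \ref{isom} gives $V_2(R_\theta K_2)=V_2(K_2)$, so the denominator is a positive constant independent of $\theta$; as $\operatorname{argch}$ is increasing, minimizing $\dd([K_1],R_\theta[K_2])$ amounts to minimizing $F(\theta):=V_2(K_1,R_\theta K_2)$.

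Next I would obtain a closed form for $F$. Writing $K_2=S_1+S_2$ as the Minkowski sum of the horizontal segment $S_1=[-\tfrac25,\tfrac25]\times\{0\}$ and the vertical segment $S_2=\{0\}\times[-\tfrac1{10},\tfrac1{10}]$, additivity of the mixed area (property \ref{mixed 3}) gives $F(\theta)=V_2(K_1,R_\theta S_1)+V_2(K_1,R_\theta S_2)$. For a segment $\sigma$ of length $L$ and direction $\psi$, the planar mixed area $V_2(K,\sigma)$ equals $\tfrac L2$ times the width of $K$ in the direction orthogonal to $\sigma$ (a classical fact, consistent with \eqref{V2 avec V1}). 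Denoting by $w(\phi)=k_1(\phi)+k_1(\phi+\pi)$ the width function of $K_1$, which is even and $\pi$-periodic, the segments $R_\theta S_1$ and $R_\theta S_2$ have lengths $\tfrac45,\tfrac15$ and directions $\theta,\theta+\tfrac\pi2$, so this yields
\[
F(\theta)=\tfrac25\,w\big(\theta+\tfrac\pi2\big)+\tfrac1{10}\,w(\theta).
\]

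From the description of $k_1$ one computes, for $\phi\in[0,\tfrac\pi2]$,
\[
w(\phi)=\alpha^{-1}\Big(\sqrt{2-\tfrac32\sin^2\phi}+\tfrac1{\sqrt2}\sin\phi\Big),\qquad \alpha=V_1(K).
\]
The decisive feature of the chosen dimensions is that the half-ellipse $K$ has equal horizontal and vertical widths: $w(0)=w(\tfrac\pi2)=\alpha^{-1}\sqrt2$. Since $w(\theta+\tfrac\pi2)=w(\tfrac\pi2-\theta)$ for $\theta\in[0,\tfrac\pi2]$, evaluating $F$ at the endpoints gives $F(0)=F(\tfrac\pi2)=\alpha^{-1}\tfrac{\sqrt2}{2}$, which is exactly the equality $\dd([K_1],[K_2])=\dd([K_1],R_{\pi/2}[K_2])$ asserted in the statement.

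It remains to check that these endpoints are \emph{global} minima, which I regard as the main point. Using $w(\theta+\tfrac\pi2)=w(\tfrac\pi2-\theta)$ and the substitution $x=\sin^2\theta$ (a bijection $[0,\tfrac\pi2]\to[0,1]$), one rewrites
\[
\alpha F(\theta)=\tfrac25\Big(\sqrt{\tfrac12+\tfrac32x}+\tfrac1{\sqrt2}\sqrt{1-x}\Big)+\tfrac1{10}\Big(\sqrt{2-\tfrac32x}+\tfrac1{\sqrt2}\sqrt{x}\Big).
\]
Each of the four terms is a concave function of $x\in[0,1]$ with a positive coefficient, so $x\mapsto\alpha F$ is concave; being concave with equal endpoint values $\tfrac{\sqrt2}{2}$, it lies above the constant chord, hence $\alpha F\ge\tfrac{\sqrt2}{2}$ on $[0,1]$ with equality only at $x\in\{0,1\}$, that is $\theta\in\{0,\tfrac\pi2\}$. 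By the $\pi$-periodicity and evenness of $F$ these are the only minimizers, which proves the proposition. The two distinct minimizing rotations then yield, via Lemma~\ref{lem geod shape}, two shortest paths between $\llbracket K_1\rrbracket$ and $\llbracket K_2\rrbracket$, distinct because $K_2$ is not a square, so that $R_{\pi/2}[K_2]\neq[K_2]$ in $\oshape^{2*}$.
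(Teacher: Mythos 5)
Your proof is correct, and the route is genuinely different from the paper's in its computational core. To obtain the explicit formula for $\theta\mapsto V_2(K_1,R_\theta K_2)$, the paper works directly with the integral $f(\theta)=\int_0^{2\pi}(k_1k_2(\cdot-\theta)-k_1'k_2'(\cdot-\theta))$, integrates by parts and uses the fact that $k_2+k_2''$ vanishes a.e., so that everything collapses onto the jump set of $k_2'$; you instead decompose the rectangle as a Minkowski sum of two segments and invoke the elementary identity $V_2(K,\sigma)=\tfrac{L}{2}\,w_{K}(\text{direction}\perp\sigma)$, which reduces everything to the width function of the half-ellipse. The two computations agree (your $F$ is $\tfrac12 f$, consistently with $c_2=\tfrac12$), but yours is more geometric, makes the role of the equal horizontal and vertical widths of $K$ transparent, and avoids the distributional bookkeeping. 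For the minimization, both arguments substitute $\sin^2\theta$ or $\cos^2\theta$ and rely on concavity of the resulting one-variable function; the paper then counts zeros of $f'$ using the signs $f'(0)>0$, $f'(\pi/2)<0$, whereas you observe directly that a concave function with equal endpoint values attains its minimum at the endpoints, which shortcuts those derivative evaluations. Two small points: your assertion that equality holds \emph{only} at $x\in\{0,1\}$ needs \emph{strict} concavity, which does hold (the terms $\sqrt{x}$ and $\sqrt{1-x}$ are strictly concave) but should be said, although plain concavity already suffices for the proposition as stated; and your closing remark that the two projected paths in $\shape^{2*}$ differ because $R_{\pi/2}[K_2]\neq[K_2]$ is too quick --- distinct geodesics upstairs can a priori project to the same path, which is why the paper compares the midpoints $\llbracket\tfrac12K_1+\tfrac12K_2\rrbracket$ and $\llbracket(1-t)K_1+tR_{\pi/2}K_2\rrbracket$ --- but that deduction lies outside the proposition itself.
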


Let us state the following fact.  Note that in general, this is not true that every shortest path in a quotient space is obtained as the projection of a shortest path.

\begin{lemma}\label{lem geod shape}
Let $[K_1], [K_2]\in \oshape^{n*}$, and let $\Phi \in O(n)$ be such that $\bd(\llbracket K_1 \rrbracket, \llbracket K_2 \rrbracket)=\d([K_1],\Phi[K_2])$. Suppose that $[\gamma]$ is the shortest path between $[K_1]$ and $\Phi[K_2]$. Then the projection 
$\llbracket \gamma \rrbracket$ is a shortest path between $\llbracket K_1 \rrbracket$ and  $\llbracket K_2 \rrbracket$. Moreover, the projection is an isometry from $[\gamma]$ to $\llbracket \gamma \rrbracket$.
\end{lemma}
 
\begin{proof}
Let us suppose that $[\gamma]:[0,1]\to X$ is  affinely parametrized. Then, for any $0\leq s\leq t \leq 1$,
\[
 \bd( \llbracket \gamma(s)\rrbracket, \llbracket \gamma(t)\rrbracket) \leq \d([\gamma(s)],[\gamma(t)])= (t-s) \d([K_1],\Phi[K_2]) =(t-s) \bd(\llbracket K_1 \rrbracket, \llbracket K_2 \rrbracket)~.
\]
Using three times this inequality, we obtain
\begin{eqnarray*}
\bd(\llbracket K_1 \rrbracket, \llbracket K_2 \rrbracket) & \leq & \bd(\llbracket \gamma(0)\rrbracket, \llbracket \gamma(s)\rrbracket) + \bd(\llbracket \gamma(s)\rrbracket, \llbracket \gamma(t)\rrbracket) + \bd(\llbracket \gamma(t)\rrbracket, \llbracket \gamma(1))\rrbracket \\
 & \leq & (s+(t-s)+(1-t)) \bd(\llbracket x\rrbracket, \llbracket y\rrbracket) = \bd(\llbracket x\rrbracket, \llbracket y\rrbracket)~.
\end{eqnarray*}
All these inequalities are equalities, so in particular $$\bd(\llbracket\gamma(s)\rrbracket, \llbracket \gamma(t)\rrbracket) = (t-s)\bd(\llbracket K_1 \rrbracket, \llbracket K_2 \rrbracket)~.$$
\end{proof}

Proposition~\ref{prop non unicite} is sufficient to prove the non-uniqueness of shortest paths in $\shape^{2*}$. Indeed, Lemma~\ref{lem geod shape} shows that the projections of the shortest paths in $\oshape^{2*}$ between $[K_1]$ and $[K_2]$, and between $[K_1]$ and $R_{\frac{\pi}{2}}[K_2]$, are again shortest paths in $\shape^{2*}$. But these two shortest paths are different: the first shortest path contains the point $\llbracket \frac{1}{2} K_1 + \frac{1}{2}K_2 \rrbracket$, and this point is not on the second shortest path $t \mapsto \llbracket (1-t)K_1 + t R_{\frac{\pi}{2}} (K_2) \rrbracket$:
$\frac{1}{2}K_1 + \frac{1}{2}K_2$ is not the image by a rotation of $(1-t)K_1 + t R_{\frac{\pi}{2}}(K_2)$, which is equivalent to say that $\frac{1}{2 }\alpha^{-1} K + \frac{1}{2}K_2$ is not the image by a rotation and a translation of 
$(1-t)\alpha^{-1} K + t R_{\frac{\pi}{2}}(K_2)$. See Figure \ref{fig:ellipse}.

\begin{figure}
\begin{center}
\includegraphics[scale=0.6]{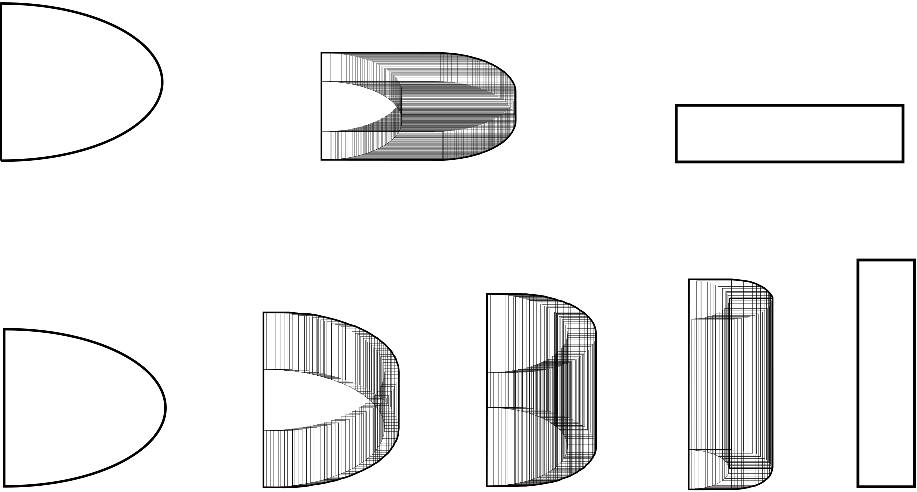}\caption{The convex body
$\frac{1}{2 }\alpha^{-1} K + \frac{1}{2}K_2$ (middle of the upper line) is not the image by a rotation and a translation of
$(1-t)\alpha^{-1} K + t R_{\frac{\pi}{2}}(K_2)$ (represented on the bottom line for $t=0,\frac{1}{4},\frac{1}{2},\frac{3}{4},1$).}\label{fig:ellipse}
\end{center}
\end{figure}

Since $R_\pi [K_2] = [K_2]$, to compute the minimum this is sufficient to consider $\theta \in [-\frac{\pi}{2},\frac{\pi}{2}]$. Moreover, let $T$ be the symmetry with respect to the $x$ axis: we have $T[K_1]=[K_1]$, hence we have
$$\dd ([K_1], R_\theta [K_2])=\dd (T[K_1], R_\theta [K_2])=\dd ([K_1], T \circ R_\theta [K_2]) = \dd ([K_1], R_{-\theta} [K_2])~.$$
This shows that in fact we need only to consider $\theta \in [0,\frac{\pi}{2}]$.

Let $k_2^\theta$ be the support function of $R_\theta[K_2]$, that is $k_2^\theta(s) = k_2(s-\theta)$. We have
$$
\cosh(\dd ([K_1], R_\theta [K_2])) = \frac{V_2(k_1,k_2^\theta)}{\sqrt{V_2(k_1)V_2(k_2^\theta)}} = \frac{f(\theta)}{2\sqrt{V_2(k_1)V_2(k_2)}}~,
$$
where we denote by $f(\theta)$ the function defined by
$$
f(\theta) = \int_0^{2\pi} (k_1(s)k_2(s-\theta) - k_1'(s)k_2'(s-\theta)) \mbox{d}s~.
$$
Proposition \ref{prop non unicite} is a direct consequence of the following lemma.
\begin{lemma}
On $[0,\frac{\pi}{2}]$, $f$ attains its minimum at the points $\theta=0$ and $\theta=\frac{\pi}{2}$.
\end{lemma}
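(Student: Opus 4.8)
The plan is to turn $f$ into an explicit elementary function of $\theta$ and then minimize it by single–variable calculus. First, note that $f(\theta)=2V_2(K_1,R_\theta K_2)$, since $V_2(k_2^\theta)=V_2(k_2)$ by rotation invariance. The key simplification is that the rectangle splits as a Minkowski sum of two orthogonal segments, $K_2=S_1+S_2$ with $S_1=[-\tfrac25,\tfrac25]\times\{0\}$ and $S_2=\{0\}\times[-\tfrac1{10},\tfrac1{10}]$. By additivity of the mixed area (property \ref{mixed 3}),
$$V_2(K_1,R_\theta K_2)=V_2(K_1,R_\theta S_1)+V_2(K_1,R_\theta S_2).$$
I would then record the elementary fact that the mixed area of a planar convex body $C$ with a segment of length $\ell$ in direction $u$ equals $\tfrac12\ell$ times the width of $C$ in the direction orthogonal to $u$; this follows at once from computing the area of $C+(\text{segment})$ via \eqref{def mixed v2}, a segment having zero area. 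Writing $w(\phi)=k_1(\phi)+k_1(\phi+\pi)$ for the width function of $K_1$, and recalling the lengths $\tfrac45$ and $\tfrac15$ of $S_1,S_2$, this yields
$$f(\theta)=\tfrac45\,w\bigl(\theta+\tfrac\pi2\bigr)+\tfrac15\,w(\theta).$$

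Next I would compute $w$ explicitly. Since width is translation invariant, the linear part $-\beta\cos s$ of $k_1$ and the centering translation drop out of $w$, so $w$ is $\alpha^{-1}$ times the width of the half–ellipse $K$. Using the given support function together with the evenness and $\pi$-periodicity of $w$, one finds $w(\phi)=\alpha^{-1}\tfrac1{\sqrt2}\bigl(\sqrt{1+3\cos^2\phi}+\sin\phi\bigr)$ for $\phi\in[0,\pi]$, whence (using $\cos(\theta+\tfrac\pi2)=-\sin\theta$, $\sin(\theta+\tfrac\pi2)=\cos\theta$) $w(\theta+\tfrac\pi2)=\alpha^{-1}\tfrac1{\sqrt2}(\sqrt{1+3\sin^2\theta}+\cos\theta)$ on $[0,\tfrac\pi2]$. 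Substituting gives $f=\tfrac{\alpha^{-1}}{5\sqrt2}\,G$ with
$$G(\theta)=4\sqrt{1+3\sin^2\theta}+\sqrt{1+3\cos^2\theta}+4\cos\theta+\sin\theta,\qquad \theta\in[0,\tfrac\pi2].$$
A direct check gives $G(0)=G(\tfrac\pi2)=10$; geometrically this reflects that the half–ellipse was chosen so that its horizontal and vertical widths are equal (both $\sqrt2$), which is exactly why $f(0)=f(\tfrac\pi2)$ and the two claimed minima take the same value.

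It then remains to prove $G(\theta)\ge 10$ on $[0,\tfrac\pi2]$, with equality only at the endpoints. The cleanest route is to show that $G$ is concave on $[0,\tfrac\pi2]$: a concave function whose two endpoint values coincide lies everywhere above the corresponding constant chord, so $G\ge\min(G(0),G(\tfrac\pi2))=10$ automatically. Equivalently one may check that $G'$ is strictly decreasing, so that (since $G'(0)=1>0$ and $G'(\tfrac\pi2)=-4<0$) it has a single zero and $G$ increases and then decreases, forcing its minimum over the interval to the two ends. I expect the concavity estimate $G''\le 0$ to be the main obstacle: the trigonometric part $4\cos\theta+\sin\theta$ is plainly concave on $[0,\tfrac\pi2]$, but the two square–root terms are individually neither convex nor concave there, so showing that their weighted sum keeps $G''$ negative requires a genuine, somewhat heavy computation of the second derivatives of $\sqrt{1+3\sin^2\theta}$ and $\sqrt{1+3\cos^2\theta}$ and a careful sign analysis. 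Everything preceding this last step is routine.
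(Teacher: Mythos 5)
Your reduction of $f$ to an explicit elementary function is correct, and it is a genuinely different (and cleaner) route than the paper's: the paper obtains the same expression by integrating by parts against the distributional second derivative of $k_2$, using $k_2+k_2''=0$ almost everywhere together with the four jumps of $k_2'$, whereas your decomposition $K_2=S_1+S_2$ combined with the identity ``mixed area with a segment $=\tfrac12\,(\text{length})\times(\text{orthogonal width})$'' avoids all distributional bookkeeping. Your $G$ agrees term by term with the paper's formula for $f$ (after rewriting $\sqrt{2\cos^2\theta+\tfrac12\sin^2\theta}=\tfrac1{\sqrt2}\sqrt{1+3\cos^2\theta}$, etc.), and the values $G(0)=G(\tfrac\pi2)=10$, $G'(0)=1$, $G'(\tfrac\pi2)=-4$ all match.

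The final step, however, fails: $G$ is \emph{not} concave on $[0,\tfrac\pi2]$. With $u=\sqrt{1+3\sin^2\theta}$ and $v=\sqrt{1+3\cos^2\theta}$ one computes $u''(0)=3$ and $v''(0)=-\tfrac32$, hence
$$G''(0)=4\cdot 3-\tfrac32-4=\tfrac{13}{2}>0~,$$
so $G$ is strictly convex near $\theta=0$ and both of your proposed finishes (concavity of $G$, or equivalently $G'$ strictly decreasing) are unavailable; the ``heavy computation'' you deferred would have refuted the claim. The repair is the paper's change of variable $c=\cos^2\theta$: since $1+3\sin^2\theta=4-3c$, $\cos\theta=\sqrt{c}$ and $\sin\theta=\sqrt{1-c}$ on $[0,\tfrac\pi2]$, you get $G(\theta)=g(\cos^2\theta)$ with
$$g(c)=4\sqrt{4-3c}+\sqrt{1+3c}+4\sqrt{c}+\sqrt{1-c}~,$$
which \emph{is} strictly concave on $[0,1]$ (each summand is the square root of an affine function, and $\sqrt{c}$, $\sqrt{1-c}$ are strictly concave), with $g(0)=g(1)=10$. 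Concavity in $c$ then gives $g>10$ on $(0,1)$, hence $G>10$ on $(0,\tfrac\pi2)$, which is exactly the claim. (The paper phrases this slightly differently: $g'$ has at most one zero, so $f'$ has at most one zero on $(0,\tfrac\pi2)$, which is incompatible with an interior minimum given $f'(0)>0>f'(\tfrac\pi2)$.) The moral is that the concavity you need holds in the variable $\cos^2\theta$, not in $\theta$ itself.
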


\begin{proof}
Fix $\theta \in (0,\frac{\pi}{2})$, and consider the function $s \mapsto k_1(s) k_2'(s-\theta)$. This function is piecewise $\mathcal{C}^1$, but is not continuous: the function $k_2'(s-\theta)$ has jumps, with height $\frac{1}{5}$ at the points $s= \theta$ and $s= \pi+\theta$,  and with height $\frac{4}{5}$ at the points $s= \frac{\pi}{2} + \theta$ and $s= \frac{3\pi}{2}+\theta$. Hence we have

$$\int_0^{2\pi} (k_1(s)k_2'(s-\theta))' \mbox{d}s = -\frac{1}{5} k_1(\theta) - \frac{1}{5} k_1(\pi+\theta) - \frac{4}{5}k_1(\frac{\pi}{2} + \theta) - \frac{4}{5} k_1(\frac{3\pi}{2} + \theta)
$$
$$ = -\frac{1}{5\alpha} \sqrt{2 \cos^2 \theta + \frac{1}{2}\sin^2 \theta} -\frac{4}{5\alpha} \sqrt{2 \sin^2 \theta + \frac{1}{2}\cos^2 \theta} - \frac{1}{5 \sqrt{2}\alpha} \sin \theta  - \frac{4}{5\sqrt{2}\alpha} \cos\theta~.
$$
The equality $(k_1 k'_2)'=k_1' k_2' +k_1 k_2''$ gives $-k_1' k_2' = k_1 k_2'' - (k_1k_2')'$, so
$$- \int_0^{2\pi} k_1'(s)k_2'(s-\theta) \mbox{d}s = \int_0^{2\pi}  (k_1(s)k_2''(s-\theta) -(k_1(s)k_2'(s-\theta))')\mbox{d}s~,$$
and since $k_2(s-\theta)+ k_2''(s-\theta)=0$ for almost every $s \in [0,2\pi]$ we finally obtain
\begin{eqnarray*}
f(\theta) &=& \int_0^{2\pi} (k_1(s)k_2(s-\theta) - k_1'(s)k_2'(s-\theta)) \mbox{d}s \\
&=& \int_0^{2\pi} (k_1(s)(k_2(s-\theta)+k_2''(s-\theta)) - (k_1(s)k_2'(s-\theta))')\mbox{d}s \\
&=& \frac{1}{5\alpha} \sqrt{2 \cos^2 \theta + \frac{1}{2}\sin^2 \theta} +\frac{4}{5\alpha} \sqrt{2 \sin^2 \theta + \frac{1}{2}\cos^2 \theta} + \frac{1}{5 \sqrt{2}\alpha} \sin \theta  + \frac{4}{5\sqrt{2}\alpha} \cos\theta~.
\end{eqnarray*}
We easily check that $f(0)=f(\frac{\pi}{2})=\frac{\sqrt{2}}{\alpha}$ (the parameters of the ellipse and the segment have been chosen so that this property holds). And a direct computation shows that $f'(0)=\frac{1}{5\sqrt{2}\alpha}>0$ and $f'(\frac{\pi}{2}) = -\frac{4}{5\sqrt{2}\alpha} <0$. Moreover, let $g : [0,1] \rightarrow [0,\infty)$ be defined by
$$
g(u)= \frac{1}{5\alpha} \sqrt{\frac{3}{2} u + \frac{1}{2}} +\frac{4}{5\alpha} \sqrt{2 - \frac{3}{2} u } + \frac{1}{5 \sqrt{2}\alpha} \sqrt{1-u}  + \frac{4}{5\sqrt{2}\alpha} \sqrt{u}~.
$$
With the identity $\cos^2+\sin^2=1$, we easily check that $g(\cos^2 \theta)=f(\theta)$ for any $\theta \in [0,\frac{\pi}{2}]$. Hence $f'(\theta)=-2 g'(\cos^2 \theta)\sin \theta \cos \theta$. But $g$ is strictly concave, hence $g'$ has at most one zero on $[0,1]$, hence $f'$ has also at most one zero on $(0,\frac{\pi}{2})$. And this ends the proof: if the minimum of $f$ on $[0,\frac{\pi}{2}]$ was attained at a point $\theta \notin \{0,\frac{\pi}{2}\}$, since $f'(0)>0$ and $f'(\frac{\pi}{2})<0$, $f'$ would have at least 3 zeros on $(0,\frac{\pi}{2})$, and that is impossible.
\end{proof}

\subsection{Embedding of hyperbolic planes}\label{sectionembeddinghyp}

Trivially, for any $\Phi\in O(n)$ we have $\Phi [B^n]=[B^n]$. Apart from the fact that the action of $O(n)$ on $\oshape^{n*}$ is not proper, this says that for any $[K]\in \oshape^{n*}$, 
\begin{equation}\label{eq:metrique boule}\bd(\llbracket K \rrbracket, \llbracket B^n \rrbracket)=\d([K],[B^n])~.\end{equation}
From this we  first deduce the following fact.

\begin{fact}[Uniqueness of shortest paths starting from $B^n$]  Let $\llbracket K \rrbracket \in \shape^{n*}$. Then there is a \emph{unique} shortest path from $\llbracket  B^n \rrbracket$ to $\llbracket K \rrbracket$, which is the projection of the shortest path in $\oshape^{n*}$ between $[B^n]$ and $[K]$.
\end{fact}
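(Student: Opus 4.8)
The plan is to leverage two special features: the space $(\oshape^{n*},\d)$ is uniquely geodesic by Theorem~\ref{main}, and the point $[B^n]$ is fixed by the whole group, $\Phi[B^n]=[B^n]$ for all $\Phi\in O(n)$, so its $O(n)$-orbit is the single point $[B^n]$. Write $p:\oshape^{n*}\to\shape^{n*}$ for the projection and let $\sigma:[0,1]\to\oshape^{n*}$ be the (unique, constant-speed) shortest path from $[B^n]$ to $[K]$. For existence, I first note that by \eqref{eq:metrique boule} we have $\bd(\llbracket B^n\rrbracket,\llbracket K\rrbracket)=\d([B^n],[K])$, so the distance $\bd$ is already realised by the representatives $[B^n]$ and $[K]$ (i.e. by $A=\mathrm{Id}$ in the notation of Lemma~\ref{lem geod shape}). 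That lemma then gives that $p\circ\sigma$ is a shortest path from $\llbracket B^n\rrbracket$ to $\llbracket K\rrbracket$.

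The key auxiliary observation is that for every $A\in O(n)$ the map $A$ is a $\d$-isometry fixing $[B^n]$, so $A\circ\sigma$ is a shortest path from $[B^n]$ to $A[K]$; by unique geodesy upstairs it is \emph{the} shortest path from $[B^n]$ to $A[K]$, and since $p\circ A=p$ we get $p\circ(A\circ\sigma)=p\circ\sigma$. In particular, the image of the midpoint is $p\bigl(A(\sigma(\tfrac12))\bigr)=p\bigl(\sigma(\tfrac12)\bigr)$ independently of $A$.

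For uniqueness, let $\bar\gamma$ be any constant-speed shortest path from $\llbracket B^n\rrbracket$ to $\llbracket K\rrbracket$, and let $\bar w=\bar\gamma(\tfrac12)$ be its midpoint. Applying Lemma~\ref{lemmeversionutile} with $\bar x=\llbracket B^n\rrbracket$ and $\bar y=\llbracket K\rrbracket$ yields $A,B\in O(n)$ such that $\bd(\llbracket B^n\rrbracket,\llbracket K\rrbracket)=\d([B^n],A[K])$ and such that, for a representative $w$ of $\bar w$, the point $B(w)$ is the midpoint of the shortest path from $[B^n]$ to $A[K]$, i.e. $B(w)=A(\sigma(\tfrac12))$. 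Projecting and using the observation above, $\bar w=p(B(w))=p\bigl(A(\sigma(\tfrac12))\bigr)=p\bigl(\sigma(\tfrac12)\bigr)=p\circ\sigma(\tfrac12)$. Hence \emph{every} shortest path from $\llbracket B^n\rrbracket$ to $\llbracket K\rrbracket$ has the same midpoint as $p\circ\sigma$.

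Finally I would iterate this midpoint rigidity along the dyadic subdivision. The restriction $\bar\gamma|_{[0,1/2]}$ is a shortest path from $\llbracket B^n\rrbracket$ to $\bar w=p\circ\sigma(\tfrac12)$; moreover $\bd(\llbracket B^n\rrbracket,\bar w)=\tfrac12\bd(\llbracket B^n\rrbracket,\llbracket K\rrbracket)=\d([B^n],\sigma(\tfrac12))$ by additivity of length along geodesics, so the representative $\sigma(\tfrac12)$ again realises the quotient distance and $\sigma|_{[0,1/2]}$ is the unique shortest path upstairs from $[B^n]$ to $\sigma(\tfrac12)$. The midpoint argument then forces $\bar\gamma(\tfrac14)=p\circ\sigma(\tfrac14)$, and symmetrically $\bar\gamma(\tfrac34)=p\circ\sigma(\tfrac34)$. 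By induction on the dyadic rationals $k/2^j$ one obtains $\bar\gamma(k/2^j)=p\circ\sigma(k/2^j)$, and continuity of both paths together with density of the dyadics gives $\bar\gamma=p\circ\sigma$. I expect the main obstacle to be precisely the independence of the midpoint from the auxiliary isometry $A$ produced by Lemma~\ref{lemmeversionutile}; this is exactly where the singleton orbit of $[B^n]$ and the unique geodesy of $\oshape^{n*}$ are used, through the identity $p\circ(A\circ\sigma)=p\circ\sigma$.
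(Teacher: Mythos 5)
Your existence step and your midpoint rigidity at $t=\tfrac12$ are correct: since $\Phi[B^n]=[B^n]$ for all $\Phi\in O(n)$, equation \eqref{eq:metrique boule} shows that any representative of $\llbracket B^n\rrbracket$ realises the quotient distance, Lemma~\ref{lemmeversionutile} applies with $\bar x=\llbracket B^n\rrbracket$, and unique geodesy upstairs together with $p\circ A=p$ forces the midpoint of \emph{any} shortest path from $\llbracket B^n\rrbracket$ to a given point to be $p$ of the upstairs midpoint. But the dyadic induction breaks at the very first step you label ``symmetrically'': $\bar\gamma|_{[1/2,1]}$ is a shortest path from $p(\sigma(\tfrac12))$ to $\llbracket K\rrbracket$, and \emph{neither} of these endpoints has a singleton $O(n)$-orbit. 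Running Lemma~\ref{lemmeversionutile} on this segment produces an $A$ with $\d(\sigma(\tfrac12),A[K])=\bd(p(\sigma(\tfrac12)),\llbracket K\rrbracket)$, but $A[K]$ need not equal $[K]$, so the upstairs midpoint you obtain need not project to $p(\sigma(\tfrac34))$. The same obstruction recurs at every later stage (midpoints of $\bar\gamma|_{[k/2^j,(k+1)/2^j]}$ with $k\geq 1$): your halving argument only pins down $\bar\gamma(2^{-j})$, which is not dense in $[0,1]$. This is not a removable technicality: Section~\ref{sectionnonuniqueness} shows that between two generic points of $\shape^{n*}$ shortest paths, and hence their midpoints, are genuinely non-unique, so midpoint rigidity really is special to segments anchored at $\llbracket B^n\rrbracket$.

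The paper's proof avoids subdivision entirely by working pointwise in $t$: for each $t$ it chooses $\Phi_t$ with $\bd(\llbracket K\rrbracket,\llbracket\delta(t)\rrbracket)=\d([K],\Phi_t[\delta(t)])$, observes that $\d([B^n],\Phi_t[\delta(t)])=\bd(\llbracket B^n\rrbracket,\llbracket\delta(t)\rrbracket)=t$ holds automatically because $[B^n]$ is fixed by $O(n)$, and deduces from the resulting equality $\d([B^n],\Phi_t[\delta(t)])+\d(\Phi_t[\delta(t)],[K])=\d([B^n],[K])$ together with unique geodesy upstairs that $\Phi_t[\delta(t)]=[\gamma(t)]$, hence $\llbracket\delta(t)\rrbracket=\llbracket\gamma(t)\rrbracket$. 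You can repair your argument by replacing the midpoint subdivision with this one-shot computation at each $t$; the midpoint lemma is then not needed at all.
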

\begin{proof}
Let $\bar \delta : [0,\bd(\llbracket B^n \rrbracket, \llbracket K \rrbracket)] \rightarrow \shape^{n*}$ be an arc-length parametrized shortest path between $\llbracket B^n \rrbracket$ and $\llbracket K \rrbracket$, and let $[\delta(t)] \in \oshape^{n*}$ be such that $\bar \delta(t) = \llbracket \delta(t) \rrbracket$. Let $t \mapsto [\gamma(t)]$ be the (unique) arc-length parametrized shortest path in $\oshape^{n*}$ between $[B^n]$ and $[K]$: we want to show that $\llbracket \delta(t) \rrbracket = \llbracket \gamma(t) \rrbracket$.

For any $t \in [0,\bd(\llbracket B^n \rrbracket, \llbracket K \rrbracket)]$, let $\Phi_t \in O(n)$ be such that $$\bd (\llbracket K \rrbracket ,\llbracket \delta(t) \rrbracket)=\d([K], \Phi_t[\delta(t)])~.$$
Since $t \mapsto \llbracket \delta(t) \rrbracket$ is a geodesic in $\shape^{n*}$, we have
\begin{eqnarray*}
 \d([B^n],\Phi_t[\delta(t)]) + \d(\Phi_t[\delta(t)],[K]) &=& \bd (\llbracket B^n \rrbracket ,\llbracket \delta(t) \rrbracket) + \bd (\llbracket \delta(t) \rrbracket ,\llbracket K \rrbracket) \\
& = & \bd (\llbracket B^n \rrbracket ,\llbracket K \rrbracket) = \d ([B^n],[K])~.
\end{eqnarray*}
Hence $\Phi_t[\delta(t)]$ is on the shortest path between $[B^n]$ and $[K]$ in $\oshape^{n*}$. Moreover, we have $\d ([B^n], \Phi_t[\delta(t)]) = \bd(\llbracket B^n \rrbracket, \llbracket \delta(t) \rrbracket)=t$ (the geodesic $t \mapsto \llbracket \delta(t) \rrbracket$ is arc-length parametrized), so $\Phi_t[\delta(t)] = [\gamma(t)]$ (remember that the geodesic
$t \mapsto [\gamma(t)]$ is also arc-length parametrized). Finally this gives 
$\llbracket \delta(t) \rrbracket= \llbracket \gamma(t) \rrbracket$.
\end{proof}

In turn, we can construct totally geodesic hyperbolic surfaces in $\shape^{n*}$. Interestingly,  many properties in this section are very general, but this one uses Alexandrov--Fenchel Inequality.

\begin{proposition}
Let $\llbracket P \rrbracket, \llbracket Q \rrbracket \in \shape^{n*}$ be such that 
$\llbracket P \rrbracket, \llbracket Q \rrbracket$ and $\llbracket B^n \rrbracket$ are three different points. Let $A \in O(n)$ be such that $\bd (\llbracket P \rrbracket, \llbracket Q \rrbracket)=\d([P],A[Q])$. Then the projection $\oshape^{n*} \rightarrow \shape^{n*}$, when restricted to the (plain) geodesic triangle with vertices $[B^n], [P]$ and $A[Q]$, is an isometry onto its image.
\end{proposition}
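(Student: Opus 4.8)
The plan is to prove that for every pair of points $x,y$ in the solid triangle $\Delta$ with vertices $v_0=[B^n]$, $v_1=[P]$ and $v_2=A[Q]$, the projection $p\colon(\oshape^{n*},\d)\to(\shape^{n*},\bd)$ satisfies $\bd(p(x),p(y))=\d(x,y)$; injectivity then follows automatically from distance preservation, so $p|_\Delta$ is an isometry onto its image. The upper bound $\bd(p(x),p(y))\le\d(x,y)$ is immediate, since $p$ is $1$-Lipschitz by the very definition \eqref{def bd} of $\bd$. All the content is in the reverse inequality, for which I intend to use that $(\shape^{n*},\bd)$ is CBB($-1$) (Proposition~\ref{prop:shapeCBBproper}).

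First I would fix the geometric picture. By Theorem~\ref{main1b}, $\oshape^{n*}$ is isometric to a convex subset of $\mathbb{H}^\infty_n$, so the three vertices span a totally geodesic copy of $\mathbb{H}^2$, and by convexity $\Delta$ sits isometrically in this $\mathbb{H}^2$ as a genuine solid hyperbolic triangle (if the three points happen to be collinear, $\Delta$ degenerates to a segment and the statement is contained in the isometry of the sides treated below). In particular $\Delta$ is ruled by the geodesics joining $v_0$ to the points of the opposite side $\gamma=[v_1,v_2]$: every $x\in\Delta$ lies on a segment $[v_0,a]$ for some $a\in\gamma$.

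Next I would check that $p$ is an isometry on each such segment and on $\gamma$. For a segment issued from $v_0$ this follows from \eqref{eq:metrique boule} — every $\Phi\in O(n)$ fixes $[B^n]$, so the identity realizes $\bd(\llbracket B^n\rrbracket,\llbracket a\rrbracket)=\d([B^n],a)$ — combined with Lemma~\ref{lem geod shape}. For $\gamma$, the choice of $A$ gives $\bd(\llbracket P\rrbracket,\llbracket Q\rrbracket)=\d(v_1,v_2)$, so Lemma~\ref{lem geod shape} again shows that $p$ carries $\gamma$ isometrically onto a shortest path between $\llbracket P\rrbracket$ and $\llbracket Q\rrbracket$. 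Consequently, for any $a,b\in\gamma$ the images $p(v_0),p(a),p(b)$ are the vertices of a geodesic triangle in $\shape^{n*}$ whose two sides issuing from $p(v_0)$ are the isometric images of $[v_0,a]$ and $[v_0,b]$, and whose third side has length $\bd(p(a),p(b))=\d(a,b)$; hence its three side lengths coincide with those of the hyperbolic triangle $v_0ab\subset\Delta$.

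Finally I would conclude by hinge comparison. Given $x,y\in\Delta$, write $x\in[v_0,a]$ and $y\in[v_0,b]$ with $a,b\in\gamma$. Applying the CBB($-1$) comparison for points lying on two sides issuing from the common vertex $p(v_0)$ (the standard consequence of Definition~\ref{def cbb}; see \cite{BBI}), one obtains $\bd(p(x),p(y))\ge d_{\mathbb{H}^2}(\tilde x,\tilde y)$, where $\tilde x,\tilde y$ are the comparison points in the model triangle in $\mathbb{H}^2$ built from the side lengths computed above. But that model triangle is congruent to $v_0ab$, and $\tilde x,\tilde y$ sit at distances $\d(v_0,x)$ and $\d(v_0,y)$ from $\tilde v_0$ along the corresponding sides, so $d_{\mathbb{H}^2}(\tilde x,\tilde y)=\d(x,y)$. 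This is the desired lower bound, and together with the $1$-Lipschitz upper bound it yields $\bd(p(x),p(y))=\d(x,y)$. The main obstacle I anticipate is purely bookkeeping around this last step: one must invoke the ``two sides from a common vertex'' form of the lower curvature bound rather than the midpoint form literally stated in Definition~\ref{def cbb}, and verify carefully that its comparison triangle is exactly the sub-triangle $v_0ab$ of $\Delta$, so that the comparison distance equals $\d(x,y)$ on the nose.
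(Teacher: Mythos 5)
Your proof is correct, but it takes a genuinely different route from the paper. You argue synthetically: you rule the solid triangle by the geodesics issued from $[B^n]$, check that the projection is isometric on each such geodesic (via \eqref{eq:metrique boule} and Lemma~\ref{lem geod shape}) and on the opposite side $[P]\,A[Q]$ (via the choice of $A$ and the same lemma), and then recover $\bd(p(x),p(y))\ge \d(x,y)$ from the CBB($-1$) comparison at the hinge $\llbracket B^n\rrbracket$, the reverse inequality being the $1$-Lipschitz property of the projection. The paper instead argues algebraically: writing the two points as $[\alpha_i B^n+\beta_i P+\gamma_i Q]$ with non-negative barycentric coefficients, it expands $V_2(K_1,\Phi(K_2))$ by bilinearity and bounds each of the nine terms from below --- using the $O(n)$-invariance of $V_2(B^n,\cdot)$, Alexandrov--Fenchel for the diagonal terms $V_2(P,\Phi(P))$ and $V_2(Q,\Phi(Q))$, and the minimality of $A$ for the cross terms --- thereby showing directly that the identity realizes the infimum in \eqref{def bd} for \emph{every} pair of points of the triangle. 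The paper's computation is elementary, self-contained, and yields the slightly more precise inequality $V_2(K_1,K_2)\le V_2(K_1,\Phi(K_2))$ for all $\Phi\in O(n)$. Your argument is conceptually shorter but leans on two external ingredients: Proposition~\ref{prop:shapeCBBproper} (no circularity there, as it is proved independently), and the classical --- but not proved in the paper --- equivalence between the midpoint form of the lower curvature bound in Definition~\ref{def cbb} and the full ``two points on two sides'' Toponogov comparison, which you rightly flag as the delicate bookkeeping step; granting that standard equivalence, your argument closes.
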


\begin{proof}
Without loss of generality, we may assume that $A$ is the identity (that is, $\bd (\llbracket P \rrbracket, \llbracket Q \rrbracket)=\d([P],[Q])$). Let $[K_1]$ and $[K_2]$ be in the geodesic triangle with vertices $[B^n], [P]$ and $[Q]$: since geodesics in $\oshape^{n*}$ are convex combinations, we can write
$$
[K_1] = [\alpha_1 B^n + \beta_1 P + \gamma_1 Q] \mbox{ and } [K_2] = [\alpha_2 B^n + \beta_2 P + \gamma_2 Q]~,
$$
where the $\alpha_i,\beta_i,\gamma_i$ are non-negative real numbers, with $\alpha_1 + \beta_1 + \gamma_1 = \alpha_2 + \beta_2 + \gamma_2 =1$. We want to prove that $\bd(\llbracket K_1 \rrbracket, \llbracket K_2 \rrbracket)=\d([K_1],[K_2])$, which means that for any $\Phi \in O(n)$ we have $\d([K_1],[K_2]) \leq \d([K_1],\Phi[K_2])$. Since $V_2$ is $O(n)-$invariant, we only need to show that
\begin{equation} \label{eq isometries triangles}
V_2(K_1,K_2) \leq V_2(K_1,\Phi(K_2))
\end{equation}
($K_1$ and $K_2$ denote two convex bodies in the equivalent classes $[K_1]$ and $[K_2]$). We have
$$
V_2(K_1,K_2) =
\alpha_1 \alpha_2 V_2(B^n) + \alpha_1 \beta_2 V_2(B^n,P) + \alpha_1 \gamma_2 V_2(B^n,Q)$$
$$
+\beta_1 \alpha_2 V_2(P,B^n) + \beta_1 \beta_2 V_2(P) + \beta_1 \gamma_2 V_2(P,Q)$$
$$
+\gamma_1 \alpha_2 V_2(Q,B^n) + \gamma_1 \beta_2 V_2(Q,P) + \gamma_1 \gamma_2 V_2(Q)~.
$$
Moreover $\Phi(K_2) = \alpha_2 B^n + \beta_2 \Phi(P) + \gamma_2 \Phi(Q)$, hence
$$
V_2(K_1,\Phi(K_2)) =
\alpha_1 \alpha_2 V_2(B^n) + \alpha_1 \beta_2 V_2(B^n,\Phi(P)) + \alpha_1 \gamma_2 V_2(B^n,\Phi(Q))$$
$$
+\beta_1 \alpha_2 V_2(P,B^n) + \beta_1 \beta_2 V_2(P,\Phi(P)) + \beta_1 \gamma_2 V_2(P,\Phi(Q))$$
$$
+\gamma_1 \alpha_2 V_2(Q,B^n) + \gamma_1 \beta_2 V_2(Q,\Phi(P)) + \gamma_1 \gamma_2 V_2(Q,\Phi(Q))~.
$$
And we obviously have $V_2(B^n,P)=V_2(B^n,\Phi(P))$ and  $V_2(B^n,Q)=V_2(B^n,\Phi(Q))$. Moreover,  Alexandrov--Fenchel Inequality \eqref{eq alex fench} gives $V_2(P)=\sqrt{V_2(P) V_2(\Phi(P))} \leq V_2(P,\Phi(P))$, and $V_2(Q)=\sqrt{V_2(Q) V_2(\Phi(Q))} \leq V_2(Q,\Phi(Q))$. And $\bd (\llbracket P \rrbracket, \llbracket Q \rrbracket)=\d([P],[Q])$ gives $V_2(P,Q) \leq V_2(P,\Phi(Q))$ and $V_2(Q,P) \leq V_2(Q,\Phi(P))$. Since all the real numbers $\alpha_i,\beta_i,\gamma_i$ are non-negative, this gives inequality \eqref{eq isometries triangles}.

\end{proof}

\subsection{Proof of Theorem~\ref{main2}}

Proposition \ref{prop:shapeCBBproper} and sections \ref{sectionnonuniqueness} and \ref{sectionembeddinghyp} give part of Theorem~\ref{main2}. It remains to prove the assertion about the boundary of $\shape^{n*}$. It obviously contains only one point: indeed, the boundary of $\oshape^{n*}$ is the set of segments up to homotheties, so the boundary of $\shape^{n*}$ is the set of segments, up to translations, positive scaling and rotations of $\R^n$, and there is only one equivalence class.

\section{The space of all the (oriented) shapes}\label{infini}

This section is an opening to the study of spaces of convex bodies, considered without making distinction between dimensions.
For $p\geq 0$, let us denote by $\iota_{n,p}$ the  canonical isometric embedding of $\R^n$ into $\R^{n+p}$ which is given by $\R^n \simeq \R^n \times \{0\}^{p} \subset \R^{n+p}$. 
Due to the intrinsic nature of $V_2$, we have that 
the map
$$\iota_{n,p}:(\oshape^{n*},\d) \to (\oshape^{(n+p)*},\dpk)$$
defined by $\iota_{n,p}([K])=[\iota_{n,p}(K)]$ is an isometry.
Let $\oshape^{\infty*}$ be the union over $n$ of $\oshape^{n*}$, quotiented by the following equivalence relation:  $[K_1]$ is equivalent to $[K_2]$ if and only if there exist $i,j \leq p$ such that  $K_1\subset\R^i$, $K_2\subset \R^j$ and $[\iota_{i,p-i}(K_1)]=[\iota_{j,p-j}(K_2)]$.
We will denote by $[K]_\infty$ an element of $\oshape^{\infty*}$. 
For two representatives of  $[K_1]_\infty, [K_2]_\infty \in \oshape^{\infty*}$ in $\R^n$, let us define
$$\di([K_1]_\infty,[K_2]_\infty)=\d([K_1],[K_2])~.$$
It is easy to see that $\di$ is well-defined and that it is actually a distance on $ \oshape^{\infty*}$.
The isometric embeddings $\iota_{n,p}$ induce isometric maps 
from $(\shape^{n*},\bd)$ to $(\shape^{(n+p)*},\bdpk)$, so in the same way we can define the set $\shape^{\infty*}$ and the metric space
$(\shape^{\infty*},\bdi)$. 

It follows from Theorems \ref{main} and \ref{main2} that $ (\oshape^{\infty*}, \di)$ and $(\shape^{\infty*},\bdi)$ 
are geodesic metric spaces. But two facts occur:
\begin{enumerate}[nolistsep]
\item it may happen that a sequence of convex bodies with non-empty interior in $\R^p$ converges to a convex body in $\oshape^\infty$ when $p$ goes to infinity. Actually, for   
$(\epsilon_p)_p$  a sequence of real numbers such that $\sqrt{p} \epsilon_p \rightarrow 0$,  one can check that the sequence $([\iota_{n,p}(K) + \epsilon_p B^{n+p}]_\infty)_p$ converges in $\oshape^{\infty*}$ to $[K]_\infty$.  In particular, there may exist other shortest paths than the convex combinations;
\item one can check that  the sequence of balls $([B^n]_\infty)_n$ (resp. $(\llbracket B^n \rrbracket_\infty)_n$) is a diverging Cauchy sequence.
\end{enumerate}

So we address the following.

\begin{question}
Describe the completion of $(\oshape^{\infty*},\di)$ and $(\shape^{\infty*},\bdi)$.
\end{question}

\begin{footnotesize}
\bibliographystyle{abbrv}

\end{footnotesize}

\end{document}